\newcommand{\jump}{c}           
\newcommand{\jumpset}{\mathcal{C}\at{q, K_\jump,L_\jump,\beta_{\jump},\nu}}
\newcommand{\jumpsettwo}{\mathcal{C}_2\at{q, K_\jump,L_\jump,\beta_{\jump},\nu}}
\newcommand{\p}{2}
\newcommand{\R}{\mathbb{R}}     
\newcommand{\E}{\mathbb{E}}     
\newcommand{\Pra}{\mathbb{P}}
\newcommand{\Z}{\mathcal{Z}}
\newcommand{\dd}{\mathrm{d}}
\newcommand{\tx}{(t, x)}
\newcommand{\opL}{\mathcal{L}}    
\newcommand{\dismu}{\tilde{\mu}}  
\newcommand{\rrho}{\tilde{\rho}}
\newcommand{\doverdt}[1]{\frac{\partial #1}{\partial t}}      
\newcommand{\at}[1]{\left( #1\right)}
\newcommand{\norm}[1]{\left\Vert #1\right\Vert}
\newcommand{\modul}[1]{\left\vert #1\right\vert}
\newcommand{\Var}[1]{\text{Var}\at{ #1}}
\newcommand{\phif}[1]{\phi_{ #1,\delta_{#1}}}               
\newcommand{\size}[1]{\mathrm{size}\at{ #1}}
\newtheorem{lemma}{Lemma}
\newtheorem{theorem}{Theorem}
\newtheorem{definition}{Definition}
\theoremstyle{empty}
\newtheorem{refproof}{Proof}
\newenvironment{proof}[1][\unskip]{\paragraph{Proof #1\newline}}{\hfill$\square$}
\title{Deep neural network approximation for high-dimensional parabolic partial integro-differential equations}
\author[1]{\bf Marcin Baranek}
\affil[1]{AGH University of Krakow, Faculty of Applied Mathematics, Al. A. Mickiewicza 30, 30-059 Kraków, Poland. Email address: mbaranek@agh.edu.pl
}
\begin{document}
    \maketitle
    \begin{abstract}
        In this article, we investigate the existence of a deep neural network (DNN)
        capable of approximating solutions to partial integro-differential equations while circumventing the curse
        of dimensionality.
        Using the Feynman-Kac theorem, we express the solution in terms of stochastic differential equations (SDEs).
        Based on several properties of classical estimators, we establish the existence of a
        DNN that satisfies the necessary assumptions.
        The results are theoretical and don't have any numerical experiments yet.

    \end{abstract}

    \section{Introduction}
    \newcommand{\X}{X^{\tx}}
\newcommand{\XX}{\bar{X}^{\tx}}

We investigate the existence of a deep neural network(DNN) approximating
the solution of the following Couchy problem
\begin{equation}
    \label{pde_problem}
    \begin{cases}
        -\doverdt{u}\tx = \opL_t u\tx + b\tx
        \quad \forall{\tx\in [0,T)\times\R^d}, \\
        u(T,x) =
        g(x)
        \quad \forall{x\in\R^d}.
    \end{cases}
\end{equation}
Where $T>0$, $A$ is a positive definite matrix.
By the symbol $\nabla$, we mean a gradient.
We discuss the dependency between the requested accuracy and number of network's parameters.
The precise assumptions about the functions $u,\ b$, and $g$ are present later in the paper.
The operator $\opL$ has the following form
\begin{equation*}
    \opL_t u(x) =
    \frac{1}{2}\Delta u\tx
    + \int_{\Z}
    u\at{t, x + \jump(t,x,z)} - u\tx
    - \jump(t,x,z)\circ\nabla u\tx \dd\nu(z),
\end{equation*}
for $\Z=\R^d\setminus\{0\}$ and $\nu(\dd z)$ is finite L\'evy measure on $\at{\Z, \mathcal{B}(\Z)}$.

We mainly focus on the high dimensional problems,
where due the dimension classic numerical methods such as finite difference or finite element methods are too expensive.
Roughly speaking, we sink in the curse of dimensionality.

To be precise, we search for a numerical method approximating the solution of~\eqref{pde_problem}
with a certain accuracy not dependent exponentially on the dimension of the domain.
There is a method called sparse tensor discretizations applied to non integro differential equations,
but the logarithm of the
inversion of the accuracy is the base, and the dependence with respect to the dimension is still
exponential, see~\cite{sparse_tensor_discretizations}.

Monte Carlo methods work pretty well.
However, there are restricted to a single evaluating point.
To obtain coverage in mean squared error sense, we can't avoid the exponential growth of cost together with accuracy,
hence the approximation of solutions in high dimensions on the full domain remains with a room for improvement.

There has already been done plenty of research in the approximation of solutions to
PDEs in high dimensions using deep neural networks, see
\cite{pde_dnn_1, pde_dnn_2, pde_dnn_5, pde_dnn_3, pde_dnn_4, dnn_master_like_heat_eq, pde_dnn_6, dnn_master_like_BS, dnn_master_like_MC, dnn_master_like_Kolmogorov}.
Moreover, we can find articles applying deep neural networks in the field of finance
\cite{finance_dnn_2, finance_dnn_3, finance_dnn_1}.
According to our best knowledge, the integral part included in the operator $\opL$ hasn't been discussed yet.
Therefore, we apply already known approaches to integro-PDEs.

The key contribution of this article is
to show that there exists a deep neural network approximating the solution of the problem~\eqref{pde_problem},
where the operator $\opL$ includes an integral part
and proposes asymptotic bounds on the number of network's parameters.
The main result of this work is
theorem~\ref{th:main_theorem}, that proves
the existence of a network approximating the solution with error not greater than the desired accuracy $\delta\in(0,1)$.
The error is calculated in $L^2$ norm with respect to any \("\)well-behaved\("\) distribution defined on $[0,T]\times \R^d$.

The idea of the proof is as follows.
We use the Feynman-Kac theorem to express the solution as a sum of two expected values.
Then we build a Monte Carlo estimators approximating those expected values.
Next,
we search for a bound on the mean error
coming from the Monte Carlo estimators,
and we replace the functions defined in the equation~\eqref{pde_problem} by predefined DNNs.
Using gained bound and the mean value theorem, we achieve the existence of such approximating network.
The last step is to calculate the asymptotic number of parameters in the obtained network.
A more formal plan is found in the next section together with clarified assumptions.

\section{Preliminaries}
We treat a $d-$dimensional vector in column form.
By $\norm{\cdot}$ we mean the Frobenius norm for matrices.
By $\mathscr{C}$ we denote the space of the differentiable functions with continuous derivatives.
Let $\at{\Omega,\Sigma,\Pra}$ be a complex probability space.
Let $\at{\Sigma_t}_{t\ge0}$ be a right-continuous filtration on $\at{\Omega,\Sigma,\Pra}$.
For a stochastic process
$(X(t))_{t\ge0}$
by
$X^{\tx}(s)$
we mean $X(s)|X(t)=x$.
Similarly by $\Pra_{\tx}$ we mean conditional probability
$\Pra\at{\cdot|X(t)=x}$
and by $\E_{\tx}$ we mean expected value with respect to the probability measure $\Pra_{\tx}$.
We denote $W$ as a $d$-dimensional Wiener process with covariance matrix identity adapted to the filtration
$\at{\Sigma_t}_{t\ge0}$.
Let $\nu$ be a finite L\'evy measure on $(\Z,\mathcal{B}(\Z))$ with $\int_{\Z}\nu(dz)=\lambda<\infty$.
By $\mu$ we mean a Poisson random measure adapted to the filtration
$\at{\Sigma_t}_{t\ge0}$
with a finite intensity measure
$\nu(\dd z)\dd t$
and independent on the Wiener process $W$, and by $\dismu$ we mean compensated random measure $\mu$.


We say that a Borel measurable function
$b:[0,T]\times\R^d\rightarrow\R$
belongs to class $\mathcal{B}\at{K_b, L_b}$
if it satisfies the following conditions:
\begin{enumerate}[label={(B.\arabic*)},leftmargin=2.5\parindent,align=left]
\item \label{B1}$\modul{b(t,x)}\leq K_b(1+\norm{x}^2)$ for all $(t, x)\in [0,T]\times\R^d$;
\item \label{B2}$b$ satisfies H\"older condition with constants $\alpha$, $\beta\in(0,1]$, which means
\begin{align*}
    \modul{b(t,x)-b(s,x)}&\leq L_{b}\modul{s-t}^{\alpha}\\
    \modul{b(t,x)-b(t,y)}&\leq L_{b}\norm{x-y}^{\beta},
\end{align*}
for a certain constant $L_{b}>0$;
\item \label{B3} $\norm{b}_{\infty}<\infty$.
\end{enumerate}
We say that a Borel measurable function
$g:\R^d\rightarrow\R$
belongs to class $\mathcal{G}\at{K_g, L_g}$
if it satisfies the following conditions:
\begin{enumerate}[label={(G.\arabic*)},leftmargin=2.5\parindent,align=left]
\item \label{G1} $\modul{g(x)}\leq K_g(1+\norm{x}^2)$;
\item \label{G2} $\modul{g(x)-g(y)}\leq L_g\norm{x-y}$;
\item \label{G3} $\norm{g}_{\infty}<\infty$.
\end{enumerate}
For $K_\jump,L_\jump > 0$ and $\beta_\jump\in(0,1]$ we say that a Borel measurable function
$c:[0,T]\times\R^d\times\Z\rightarrow\R^d$
belongs to class $\jumpset$
if it satisfies the following conditions:
\begin{enumerate}[label={(C.\arabic*)},leftmargin=2.5\parindent,align=left]
\item \label{c_1}
for some $q>2$ the function $\jump$ is such that
$
\E\int_{\Z\times[0,T]}\norm{c(t, X_t^{(s,x)}, z)}^q\nu(\dd z)\dd t<\infty;
$
\item
\label{c_2}
$
\at{\int_{Z}\norm{\jump(0,0,z)}^\p\nu(\dd z)}^{1/\p} \leq K_\jump;
$
\item
\label{c_3}
$
\at{\int_{Z}\norm{
    \jump(t,x_1,z)-\jump(t,x_2,z)
}^\p\nu(\dd z)}^{1/\p} \leq L_\jump\norm{x_1-x_2}
$ for all $x_1,x_2\in\R^d,t\in[0,T]$;
\item
\label{c_4}
$
\at{\int_{Z}\norm{
    \jump(t_1,x,z)-\jump(t_2,x,z)
}^\p\nu(\dd z)}^{1/\p}
\leq
L_\jump(1+\norm{x})
\modul{t_1-t_2}^{\beta_\jump}
$ for all $x\in\R^d,t_1,t_2\in[0,T]$.
\end{enumerate}

Using conditions~\ref{c_2},\ref{c_3},\ref{c_4} we can show that for
$(t,x)\in[0,T]\times\R^d$ and $\jump\in\jumpset$
\begin{equation}
    \label{jump_int_norm}
    \at{\int_{Z}\norm{\jump(t,x,z)}^\p\nu(\dd z)}^{1/\p}
    \leq
    \max\{L_\jump,L_\jump T^{\beta_\jump}+K_\jump\}(1+\norm{x}).
\end{equation}
We could also consider a narrower class $\jumpsettwo$, for which we can get much better results.
We say that $\jump\in\jumpsettwo$ if $\jump\in\jumpset$ and $c(t,x,z)=c(t,0,z)$ for all $t,z$.

We assume that for any $\delta_\jump, \delta_d, \delta_g \in(0,1)$ we have access to predefined relu DNNs
$\phif{\jump}, \phif{b}$ and $\phif{g}$ such that
\begin{align}
    \label{D1}
    \int_{Z} \norm{\jump(t,x,z) - \phif{\jump}(t,x,z)}^2\nu(\dd z)
    &\leq \delta_c\\
    \norm{b - \phif{b}}_{\infty} &\leq \delta_b \label{D2}\\
    \norm{g - \phif{g}}_{\infty} &\leq \delta_g \label{D3}.
\end{align}
Moreover,
\begin{align}
    \size{\phif{c}}&\in \mathcal{O}\at{d^{o_1}\delta_c^{-o_2}}\label{S1}\\
    \size{\phif{b}}&\in \mathcal{O}\at{d^{o_1}\delta_b^{-o_2}}\label{S2}\\
    \size{\phif{g}}&\in \mathcal{O}\at{d^{o_1}\delta_g^{-o_2}}\label{S3},
\end{align}
for some $o_1,o_2\in(1,\infty)$.
We say that a Borel measurable function
$u:[0,T]\times\R^d\rightarrow\R$
belongs to class $\mathcal{U}\at{K_u}$
if it satisfies the following conditions:
\begin{enumerate}[label={(U.\arabic*)},leftmargin=2.5\parindent,align=left]
\item\label{U1} $u$ solves \eqref{pde_problem};
\item\label{U2} $u\in\mathscr{C}^{1,2}\at{[0,T)\times\R^d}$;
\item\label{U3} $u$ satisfies the growth condition:
$\max_{t\in[0,T]}\vert u(t,x)\vert\leq K_u(1+\norm{x}^{2}).$
\end{enumerate}


Then based on Feynman-Kac theorem for integral-PDE (see~\cite[Theorem 17.4.10]{fayman_kac_Rd}) we have
\begin{equation}
    \label{eq:u_decompositin}
    u\tx=\E g\at{X_T^{\tx}} + \E\int_t^{T}b\at{s,X_s^{\tx}}ds,
\end{equation}
Where $X$ is the unique solution of the following SDE
\begin{equation}
    \label{intro.x_t_definition}
    \begin{cases}
        \dd X_t = \dd W_t
        + \int_{\Z} c(t, X_t, z)\dismu(\dd t, \dd z)\\
        X_s = x\in\R^d
    \end{cases}.
\end{equation}
The Feynman-Kac theorem assumptions are met by \ref{B1}, \ref{G1}, \ref{c_1}, \ref{U1}, \ref{U2}, \ref{U3}.

To go further, we need to provide an introduction to the theory of Deep Neural Networks (DNNs).
It covers basic definitions and crucial lemmas.
First, we cite the definition II.1 from~\cite{dnn_appro_theory}

\begin{definition}[\cite{dnn_appro_theory}]
    Let $L,N_0,N_1,\dots,N_L\in\mathbf{N}$.
    A ReLU neural network $\phi$ is a map $\phi:\R^{N_0}\rightarrow \R^{N_L}$ given by
    \[
        \phi =
        \begin{cases}
            W_1    &L=1\\
            W_2\circ \rho \circ W_1 &L=2\\
            W_L\circ \rho \circ W_{L-1}\circ\cdots\circ \rho \circ W_1 &L>2,
        \end{cases}
    \]
    where, for $l\in{1,2,\cdots,L}$, $W_l(x) = A_lx+b_l$ are the associated affine transformations with matrices
    $A_l\in\R^{N_l\times N_{l-1}}$ and bias vector $b_l\in\R^{N_l}$,
    and the ReLU activation function $\rho(x)=\max\{0,x\}$.
    By $\size{\phi}$ we mean the total number of nonzero
    entries in the matrices $A_l$ and $b_l$, $l\in{1,2,\dots, L}$.
\end{definition}

We are going to build an approximating DNN based on the already existing DNNs, using the following lemmas:
\begin{lemma}[\cite{dnn_appro_theory}]
    \label{lemma:dnn_composition}
    Let $\phi_1$ and $\phi_2$ be two ReLU DNNs with input dimensions $N_0^i\in\mathbb{N}$ and output dimensions
    $N_L^i\in\mathbb{N}$, $i=1,2$ such as
    $N_0^1=N_L^2$.
    There exists a ReLU DNN $\phi$ such that $\size{\phi} = 2\size{\phi_1} + 2\size{\phi_2}$
    and $\phi(x) = \phi_1(\phi_2(x))$ for every $x\in\R^{N_0^2}$.
\end{lemma}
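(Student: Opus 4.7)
The plan is to explicitly construct a ReLU network of depth $L_1 + L_2$ whose layer-by-layer computation reproduces $\phi_1(\phi_2(x))$. The naive concatenation fails because the last (affine) layer of $\phi_2$ and the first (affine) layer of $\phi_1$ are not separated by a nonlinearity, and collapsing them into a single affine map gives no a priori control on the number of nonzero entries in the product. To get around this I would insert the identity $y = \rho(y) - \rho(-y)$ at the junction, which costs a factor of two in width but preserves the value exactly and restores the required ReLU layer between the two originally-affine steps.

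Concretely, write $\phi_i = W_{L_i}^{(i)} \circ \rho \circ \cdots \circ \rho \circ W_1^{(i)}$ with $W_\ell^{(i)}(y) = A_\ell^{(i)} y + b_\ell^{(i)}$. I keep the first $L_2 - 1$ layers of $\phi_2$ unchanged, then replace $W_{L_2}^{(2)}$ by the stacked affine map sending $y$ to the column vector whose top block is $A_{L_2}^{(2)} y + b_{L_2}^{(2)}$ and whose bottom block is $-A_{L_2}^{(2)} y - b_{L_2}^{(2)}$, followed by $\rho$. Next I apply the affine map $(u, v) \mapsto A_1^{(1)}(u - v) + b_1^{(1)}$ followed by $\rho$, and finally append layers $2, \dots, L_1$ of $\phi_1$ unchanged. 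After the doubled layer and its ReLU we have the pair $(\rho(\phi_2(x)), \rho(-\phi_2(x)))$, so the subsequent affine map outputs $A_1^{(1)} \phi_2(x) + b_1^{(1)} = W_1^{(1)}(\phi_2(x))$, and the remaining layers of $\phi_1$ are then applied exactly as in the original network, giving $\phi(x) = \phi_1(\phi_2(x))$.

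For the size count, the doubled matrix and bias in the modified $W_{L_2}^{(2)}$ contribute at most $2\,\size{W_{L_2}^{(2)}}$ nonzero entries, and the widened matrix in the modified $W_1^{(1)}$ contributes at most $2\,\size{W_1^{(1)}}$ nonzero entries; every other layer is untouched. Bounding crudely by $\size{W_{L_2}^{(2)}} \leq \size{\phi_2}$ and $\size{W_1^{(1)}} \leq \size{\phi_1}$ yields the claimed $\size{\phi} \leq 2\,\size{\phi_1} + 2\,\size{\phi_2}$. Degenerate cases with $L_1 = 1$ or $L_2 = 1$ reduce to the same identity trick applied to a single affine layer and can be handled separately. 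The main obstacle is purely bookkeeping: one has to place the identity layer exactly at the affine-affine junction so that the wasteful doubling is confined to those two layers, while every other layer keeps its original size.
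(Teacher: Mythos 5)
The paper does not prove this lemma — it cites it directly from the reference, so there is no in-paper proof to compare against. Your construction (insert the identity representation $y = \rho(y) - \rho(-y)$ at the affine-affine junction, doubling only the last layer of $\phi_2$ and the first layer of $\phi_1$, then bound $\size{W_{L_2}^{(2)}} \leq \size{\phi_2}$ and $\size{W_1^{(1)}} \leq \size{\phi_1}$) is exactly the standard argument in the cited reference, and your size accounting is correct. One small point of care: what you prove, and what the reference actually states, is the inequality $\size{\phi} \leq 2\size{\phi_1} + 2\size{\phi_2}$, whereas the lemma as quoted in this paper asserts equality; the upper bound is the correct form and is all that is used downstream, so the paper's ``$=$'' should be read as ``$\leq$''.
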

\begin{lemma}[\cite{dnn_appro_theory}]
    \label{lemma:dnn_sum}
    Let $\phi_i$ $i=1,\dots,n$ be ReLU DNNs with the same input dimension $N_0\in\mathbb{N}$, $\size{\phi_i} =M_i$.
    Let $a_i$, $i = 1,\dots,n$ be scalars.
    Then there exists a
    ReLU DNN $\phi$ with
    \[
        \size{\phi}\leq\sum_{i=1}^n M_i
    \]
    such that $\phi(x) =\sum_{i=1}^n a_i\phi_i(x)$ for every $x\in\R^{N_0}$.
\end{lemma}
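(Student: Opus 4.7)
The plan is to realize $\phi$ by placing $\phi_1,\dots,\phi_n$ in parallel inside a single ReLU network and folding the linear combination $\sum_i a_i\phi_i(x)$ into the final affine map. I would carry this out in three steps: first normalize all networks to a common depth $L=\max_i L_i$; second, stack them into one network via block-diagonal hidden weights; third, absorb the scalars $a_i$ into the last affine layer.

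For the construction, write each (padded) $\phi_i$ as $W_L^i\circ\rho\circ W_{L-1}^i\circ\cdots\circ\rho\circ W_1^i$ with $W_l^i(y)=A_l^i y+b_l^i$. I would define the first layer of $\phi$ by vertically stacking the matrices $A_1^i$ (with concatenated biases), since all $\phi_i$ see the same input $x$; each intermediate layer $l\in\{2,\dots,L-1\}$ by the block-diagonal matrix $\mathrm{diag}(A_l^1,\dots,A_l^n)$ with concatenated biases; and the last affine layer by the row block $(a_1 A_L^1,\dots,a_n A_L^n)$ with bias $\sum_i a_i b_L^i$. Chained through the ReLU activations this evaluates to $\sum_i a_i\phi_i(x)$ by construction. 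For the size accounting, stacked and block-diagonal matrices have nonzero counts equal to the sum of the nonzero counts of the constituent blocks, multiplying a column block $A_L^i$ by the scalar $a_i$ never creates new nonzeros, and the combined bias $\sum_i a_i b_L^i\in\R^{N_L}$ has at most $N_L\le\sum_i\mathrm{nnz}(b_L^i)$ nonzero entries. Summing layer-wise yields $\size{\phi}\le\sum_{i=1}^n M_i$.

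The main obstacle is the depth-normalization step, since padding a shallow $\phi_i$ up to depth $L$ by ReLU layers that implement the identity (for instance via $x=\rho(x)-\rho(-x)$, or a shift-and-$\rho$ trick when outputs are bounded) strictly introduces additional parameters. To keep the clean bound $\sum_i M_i$ one must either reduce to the equal-depth case or invoke the size convention from \cite{dnn_appro_theory}, under which such identity padding is absorbed into the count. Once depths are aligned, the remainder is bookkeeping: parallelization via block-diagonal weights does not introduce new nonzero entries, and the linear combination is free because it lives inside the final affine transformation.
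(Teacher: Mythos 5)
The paper does not prove this lemma --- it is imported verbatim from \cite{dnn_appro_theory} --- so there is no in-paper proof to compare against; I can only assess your construction on its own terms. The parallel block-diagonal realization you describe is the standard one and is correct whenever all $\phi_i$ share a common depth $L$: vertically stacking the first affine maps, block-diagonalizing the hidden layers, and folding the scalars $a_i$ into the last affine map does compute $\sum_i a_i\phi_i(x)$, and a layer-by-layer count gives $\size{\phi}\leq\sum_i M_i$. One small slip in the bookkeeping: you wrote $N_L\leq\sum_i\mathrm{nnz}(b_L^i)$, which is false in general (take all $b_L^i=0$); the inequality you actually need is $\mathrm{nnz}\bigl(\sum_i a_i b_L^i\bigr)\leq\sum_i\mathrm{nnz}(b_L^i)$, which does hold since the sum can only be nonzero at positions where some $b_L^i$ is.

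The genuine gap is exactly the one you flag and then leave open. The lemma as stated imposes no common-depth hypothesis, and your depth-normalization step is not free: padding a shallower $\phi_i$ to depth $L$ via $x=\rho(x)-\rho(-x)$ roughly doubles the nonzero count of its output affine map and inserts new identity blocks in each added layer, so the padded network has size strictly larger than $M_i$ and the clean bound $\sum_i M_i$ is lost. Saying one could ``invoke the size convention of \cite{dnn_appro_theory}'' is not an argument; to close the proof you must either (a) verify that the cited source defines $\mathrm{size}$ so that identity padding is not charged, (b) restrict the lemma to equal depths (which is in fact how it is used in this paper, since each $\phixi{t_{i+1}}$ and each $\phif{c}$ summand fed into Lemma~\ref{lemma:dnn_sum} has the same architecture), or (c) accept a weaker bound with an explicit additive or multiplicative overhead for the padding. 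As written, your proposal establishes the lemma only in the equal-depth case.
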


Further, we write a ReLU DNN taking a pair of arguments $\phi(t,x)$.
In that case, we mean $\phi(t,x)=\phi(y)$ where $y$ is the concatenation of $t$ and $x$.
The notation with the pair of arguments is to improve readability.

We aim to prove that there exists a DNN approximating the solution of \eqref{pde_problem} that avoids the dimensionality curse.
We start by building a Monte Carlo estimators approximating \eqref{eq:u_decompositin} such that,
the process $\X$ is generated using the Euler algorithm based on the information from predefined ReLU DNNs
(see \eqref{D1}, \eqref{D2}, \eqref{D3}) instead of the exact information from coefficients of \eqref{pde_problem}.
The error coming from inexact information is discussed in the subsection \ref{subsec:altogithm}.

A method approximating $\E_{\tx}g(\X_t)$ and $\E_{\tx}\int_t^T b(s,\X_s)\dd s$ is covered in subsection \ref{subsec:approximation_methgod}.
Moreover, we express those estimators in terms of DNNs.
Finally, the combination of obtained DNNs is used to prove the main result in section \ref{sec:main_proof}.
\begin{theorem}
    \label{th:main_theorem}
    Let
    $
    b\in\mathcal{B}\at{K_b,L_b,\beta},
    g\in\mathcal{G}\at{K_g,L_g},
    c\in\jumpset
    $
    and for any
    $\delta_\jump, \delta_d, \delta_g \in(0,1)$
    there exist
    $\phif{\jump}, \phif{b}$ and $\phif{g}$ such that
    \begin{align*}
        \int_{Z} \norm{\jump(t,x,z) - \phif{\jump}(t,x,z)}^2\nu(\dd z)
        &\leq \delta_c\\
        \norm{b - \phif{b}}_{\infty} &\leq \delta_b\\
        \norm{g - \phif{g}}_{\infty} &\leq \delta_g,
    \end{align*}
    with
    \begin{align*}
        \size{\phif{c}}&\in \mathcal{O}\at{d^{o_1}\delta_c^{-o_2}}\\
        \size{\phif{b}}&\in \mathcal{O}\at{d^{o_1}\delta_b^{-o_2}}\\
        \size{\phif{g}}&\in \mathcal{O}\at{d^{o_1}\delta_g^{-o_2}},
    \end{align*}
    for some $o_1,o_2\in(1,\infty)$.
    Then for any
    $f:\R^d\rightarrow \R$
    such as
    $\int_{\R^d} f(x)\mathrm{d}x=1$ and $\int_{\R^d}x^2 f(x)\mathrm{d}x<\infty$ ,
    and for any $\delta\in(0,1)$, exists a ReLU DNN $\phi$ such that
    \begin{equation*}
        \label{eq:final_lemma_thesis}
        \sqrt{
            \int_0^T\int_{\R^d}
                \modul{u(t,x)-\phi(t,x)}^2 f(x)
            \mathrm{d}x\mathrm{d}t
        }
        \leq \delta,
    \end{equation*}
    where $u$ is the exact solution of~\eqref{pde_problem}.
    Moreover, for $\eta=2+\min\{\beta, 2\alpha, 4\beta,\beta_{\jump}-2\}$ and
    $\kappa=\mathbf{1}_{\{\beta\ge\frac{1}{2}\}}+\frac{\mathbf{1}_{\{\beta<\frac{1}{2}\}}}{2\beta}$
    \[
        \size{\phi}\in\mathcal{O}\at{
            d^{o_1}
            \delta^{-2\at{
                \frac{2}{\eta}+1
                + o_2\kappa
            }}
            2^{\delta^{-\kappa/\beta_\jump}}
        }.
    \]
    If $c\in\jumpsettwo$ then
    \[
        \size{\phi}\in\mathcal{O}\at{
            d^{o_1}
            \delta^{
                -2\at{
                    \max\{\frac{2}{\eta},\frac{1}{\beta_c}\}
                    +1
                    + o_2\kappa
                }
            }
        }.
    \]
    The constant hidden in the Landau symbol depends only on the
    parameters of classes
    $\mathcal{B},\mathcal{G},\mathcal{C}$, $T$
    and predefined ReLU DNNs $\phif{c},\phif{b}$ and $\phif{g}$.
\end{theorem}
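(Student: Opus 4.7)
The plan is to combine the Feynman--Kac representation~\eqref{eq:u_decompositin} with an Euler--Maruyama discretization of the SDE~\eqref{intro.x_t_definition} and a Monte Carlo average, after substituting the coefficients $\jump, b, g$ by the predefined networks $\phif{\jump},\phif{b},\phif{g}$. On a uniform grid $0=t_0 < \cdots < t_N = T$ of mesh $h=T/N$, I would define the perturbed Euler scheme $\bar X$ by
\begin{equation*}
\bar X_{k+1} = \bar X_k + \at{W_{t_{k+1}} - W_{t_k}} + \int_{\Z} \phif{\jump}(t_k, \bar X_k, z)\,\dismu\at{(t_k, t_{k+1}], \dd z},
\end{equation*}
draw $M$ independent samples $\omega^{(1)},\dots,\omega^{(M)}$ of the driving noise, and form the Monte Carlo estimator
\begin{equation*}
\hat u(t,x;\omega) = \frac{1}{M}\sum_{j=1}^M \phif{g}\at{\bar X_N^{\tx}(\omega^{(j)})} + \frac{h}{M}\sum_{j=1}^M \sum_{k:\, t_k\ge t} \phif{b}\at{t_k, \bar X_k^{\tx}(\omega^{(j)})}.
\end{equation*}
Conditional on $\omega$, the Poisson measure $\mu$ is a finite sum of Dirac masses, so each Euler step is a finite arithmetic expression in $\phif{\jump}$ and affine maps; by lemmas~\ref{lemma:dnn_composition} and~\ref{lemma:dnn_sum} the map $(t,x)\mapsto \hat u(t,x;\omega)$ is then a ReLU DNN $\phi^\omega$.

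Next I would bound $\E \int_0^T\int_{\R^d} \modul{u-\hat u}^2 f(x)\dd x\dd t$ by splitting the error into four pieces: (i) the time-discretization bias of the exact-coefficient Euler scheme, of order $h^{\eta}$ at the effective rate specified in the theorem, obtained from standard strong-convergence estimates for jump SDEs with H\"older coefficients (using~\ref{c_3},~\ref{c_4},~\ref{B2}); (ii) the coefficient-perturbation error from substituting $\phif{\jump},\phif{b},\phif{g}$, controlled by a discrete Gronwall iteration built from~\ref{c_3} that, in the general $\mathcal{C}$ case, amplifies $\delta_\jump+\delta_b+\delta_g$ by a factor which when balanced against the step count forces $\delta_\jump,\delta_b,\delta_g\lesssim \delta^{2\kappa}$; (iii) the Monte Carlo variance, of order $M^{-1}$, bounded through second-moment estimates on $\bar X$ that follow from~\eqref{jump_int_norm},~\ref{B1},~\ref{G1}, and the assumption $\int\norm{x}^2 f(x)\dd x<\infty$; and (iv) the Riemann-sum error for $\int_t^T b\,\dd s$, of order $h^{\min\{2\alpha,2\beta\}}$, controlled via~\ref{B2}. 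A pigeonhole argument over $\omega$ then yields a deterministic realization $\omega^\star$ for which $\phi := \phi^{\omega^\star}$ attains the target accuracy in $L^2(f\dd x\dd t)$.

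For the size estimate I would choose $N$, $M$ and the three $\delta$-parameters according to the balance forced by the error decomposition above, and plug~\eqref{S1}--\eqref{S3} into the iterated composition count. The key combinatorial observation is that the recursion $T_{k+1}=2\size{\phif{\jump}} + 2T_k$ inherited from lemma~\ref{lemma:dnn_composition} solves to $T_N=\Theta\at{2^N\size{\phif{\jump}}}$, so the $N$ Euler steps produce the exponential factor $2^{\delta^{-\kappa/\beta_\jump}}$ in the general bound; in contrast, in the $\mathcal{C}_2$ case $\jump(t,x,z)=\jump(t,0,z)$ is independent of $\bar X_k$, so the jump increments can be \emph{accumulated by summation} rather than by composition of $\phif{\jump}$ (lemma~\ref{lemma:dnn_sum} suffices in place of~\ref{lemma:dnn_composition}), collapsing the size to a polynomial in $\delta^{-1}$ and giving the sharper bound.

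The principal obstacle will be step (ii): producing a sharp discrete-Gronwall-type estimate for the propagation of the $L^2$ coefficient perturbation through the $N$ coupled Euler steps, and identifying the precise exponent $\kappa$, including the crossover at $\beta=\tfrac12$ driven by the H\"older regularity of $b$, so that the amplification matches the theorem statement. One must also verify that the Gronwall constant $e^{L_\jump T}$ is absorbed into the hidden constant of the Landau symbol rather than into the $\delta$-dependent part, and that the additive decomposition in the $\mathcal{C}_2$ case is genuinely exploited so that the exponential size factor $2^{\delta^{-\kappa/\beta_\jump}}$ truly disappears. Once these points are handled, the final parameter optimization and application of~\eqref{S1}--\eqref{S3} are essentially mechanical.
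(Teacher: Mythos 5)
Your proposal follows essentially the same route as the paper: Feynman--Kac, Euler discretization of~\eqref{intro.x_t_definition} with coefficients replaced by the predefined DNNs, Monte Carlo averaging, a pigeonhole over noise realizations, and a size count via lemmas~\ref{lemma:dnn_composition} and~\ref{lemma:dnn_sum} with the composition-vs-summation dichotomy between $\jumpset$ and $\jumpsettwo$. (The paper factors the argument into two estimators, lemmas~\ref{lemma:g_DNN} and~\ref{lemma:b_DNN}, one for $\E g(X_T)$ and one for $\E\int b$, rather than a single $\hat u$, but that is cosmetic.)

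There is, however, a step that does not go through as written. You define the perturbed Euler increment as $\int_{\Z}\phif{\jump}(t_k,\bar{X}_k,z)\,\dismu((t_k,t_{k+1}],\dd z)$ with the \emph{compensated} measure $\dismu$, and then assert that conditional on $\omega$ each step ``is a finite arithmetic expression in $\phif{\jump}$ and affine maps.'' That is true for $\mu$, which is a finite sum of Dirac masses, but $\dismu=\mu-\nu(\dd z)\dd t$ also carries the compensator contribution $-(t_{k+1}-t_k)\int_{\Z}\phif{\jump}(t_k,\bar{X}_k,z)\,\nu(\dd z)$, a genuine integral over $\Z$ which is not a finite ReLU expression in general; so $(t,x)\mapsto\hat u(t,x;\omega)$ is \emph{not} a DNN as defined. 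The paper avoids this by shifting the jump marks instead of subtracting the compensator: the scheme~\eqref{discret_x_def} uses $\sum_{N(t_j)<k\le N(s)}\phif{\jump}(t_j,\bar{X}_{t_j},\tilde{\rho}_k)$ with $\tilde{\rho}_k=\rho_k-\int_{\Z}z\,\nu(\dd z)$, which \emph{is} a finite sum of $\phif{\jump}$-evaluations. This replacement is itself an approximation, and its error is precisely what Lemma~\ref{sde.schema.error} controls; it would have to be folded into your item (ii).

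A second, smaller gap: the pigeonhole cannot be run on the $L^2$ error alone, because one must simultaneously control the total number of jumps $\sum_j N_T(\omega^{(j)})$ in the selected trajectories, since it enters the network-size bound through Lemma~\ref{lemma:x_net_size}. The paper handles this by augmenting the expectation functional with $\modul{\E\sqrt{N_T}-\tfrac{1}{M}\sum_j\sqrt{N_T(\omega^{(j)})}}^2$ (Lemma~\ref{second_part_with_N}), so that one pigeonhole yields both the accuracy bound and $\sum_j N_T(\omega^{(j)})\le 4M^2T\lambda$. Without such an augmentation, the selected realization could have an arbitrarily large jump count and the size estimate would not follow.
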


\let\XX\undefined
\let\X\undefined

    \newcommand{\X}{X^{\tx}}
    \newcommand{\XX}{\bar{X}^{\tx}}

    \section{Approximation algorithm}
    In this section we introduce our approach to the approximation of the solution using DNNs.
    First, we define discretization methods and analyze their error.
    The second subsection describes the approximation done by deep neural networks.
        \subsection{Basics of the algorithm}
        \label{subsec:altogithm}
        We establish several auxiliary lemmas related to stochastic differential equations.
Additionally, we introduce a discretization method for the process $X$
and, subsequently, derive a key inequality associated with this discretization approach.

We introduce a discreet version of the process $X$.
To do that, we need to define a scalar Poisson process
$N=(N(t))_{t\ge 0}$
with intensity $\lambda$ and an infinity i.i.d.
Sequence of $\Z$-valued random variables $(\rho_k)_{k=1}^{\infty}$ with distributions $\nu(\dd z)/\lambda$.
We put $E_z=\int_{\Z}z\nu(\dd z)$.
Based on~\cite[Proposition 1.4.2]{Poisson_process_from_measure} the Poison random measure $\mu$ may be written as
$\mu(E\times(s,t]))=\sum_{N(s)<k\leq N(t)} \mathbf{1}_E(\rho_k)$.
We assume that $N_{Euler}$ and $T$ are fixed.
Then $t_i=iT/N_{Euler}$ for $i=0,\dots,N_{Euler}$.

Let $\at{Z}_{i=0}^{\infty}$ be an i.i.d.
Sequence of random variables with normal distributions, and we put
$\rrho_k=\rho_k-E_z$ for each $k$,
then for any $t\in[0,T]$, and $x\in\R^d$ we define $\bar{X}$ as follows
\begin{equation}
    \label{discret_x_def}
    \begin{split}
        \XX_t &= x,\\
        \XX_s &= \XX_t
        + Z_i\sqrt{s-t}
        + \sum_{N(t)<k\leq N(s)}\phif{\jump}\at{t, \XX_t, \rrho_k},
        \ \text{for}\ s\in(t, \min\{t_i:\ t_i>t\}],\\
        \XX_s &= \XX_{t_j}
        + Z_j\sqrt{t_{j+1}-s}
        +
        \sum_{N(t_j)<k\leq N(s)}\phif{\jump}\at{t_j, \XX_{t_j}, \rrho_k},
        \ \text{for}\ s\in(t_j,t_{j+1}],\ t_j=\max\{t_i:\ t_i<s\}.
    \end{split}
\end{equation}
Immediately from above, we can write
\begin{equation*}
    \XX_s = x + Z\sqrt{s-t} + \sum_{N(t)<k \leq N(s)}\phif{\jump}\at{t_i(k,N),\XX_{t_i(k,N)}, \rrho_k},
\end{equation*}
where $t_i(k,N)=\max\{t_j:N_{t_j}\leq k\}$.
If $\jump\in\jumpsettwo$ then
\begin{equation*}
    \XX_s
    = x
    + Z\sqrt{s-t}
    + \sum_{N(t)<k \leq N(s)}\phif{\jump}\at{t_i(k,N),\rrho_k}.
\end{equation*}
We use the following approximation method
\begin{equation*}
    \int_{\Z\times(t,s]}\jump\at{t, \XX_t,z}\dismu\at{\dd z,\dd r}
    \approx
    \sum_{N(t)<k\leq N(s)}
    \phif{\jump}\at{t_i, \XX_{t_i}, \rho_k-\int_{\Z}z\nu(\dd z)}.
\end{equation*}

\newcommand{\N}{N_{Euler}}
\newcommand{\phix}[1]{\phi_{X}^{#1}}
\newcommand{\phixi}[1]{\phi^{#1}}
\newcommand{\phixc}[1]{\phi_{c}^{#1}}
\newcommand{\psir}[1]{\psi_{\tilde{\rho}_{#1}}}

We demonstrate that almost any trajectory that comes from the above discretization
can be expressed in terms of specific ReLU deep neural network.
This result constitutes a fundamental lemma and is frequently referenced in further chapters.

\begin{lemma}
    \label{lemma:x_net_size}
    Let $0\leq t_0\leq \dots \leq t_{\N}\leq T$ be fixed.
    Then for almost every trajectory $\bar{X}(\omega)$ there exists a ReLU DNN
    $\phix{t_i}$, such that $\phix{t_i}(\bar{X}_{t_{0}}(\omega)) = \bar{X}_{t_{i}}(\omega)$
    and
    \[
        \size{\phix{t_i}} \leq 2^{i}\at{3d+N_T\size{\phif{c}}}.
    \]
    If $\jump\in\jumpsettwo$
    \[
        \size{\phix{t_i}} \leq d + N_{Euler}N_T\size{\phif{c}}.
    \]
\end{lemma}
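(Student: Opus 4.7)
The claim is structural: for a fixed realisation of the Brownian and Poisson increments, each Euler iterate is a composition of affine maps and evaluations of $\phif{\jump}$, and can therefore be written as a ReLU DNN with those random values baked into its weights and biases. I would prove it by induction on $i$, using only Lemma~\ref{lemma:dnn_composition} and Lemma~\ref{lemma:dnn_sum}.

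First I restrict attention to the full-probability event on which $N_T(\omega)<\infty$, so that the increments $Z_j(\omega)\sqrt{t_{j+1}-t_j}$ and the jump marks $\rrho_k(\omega)$ are concrete numerical constants. The base case $\phix{t_0}(y)=y$ is the identity on $\R^d$, of size $d$. For the inductive step I use the one-step recursion
\[
    \bar X_{t_{i+1}} \;=\; \bar X_{t_i} + Z_i\sqrt{t_{i+1}-t_i} + \sum_{k=N(t_i)+1}^{N(t_{i+1})} \phif{\jump}\at{t_i,\bar X_{t_i},\rrho_k},
\]
viewed as a function of $y=\bar X_{t_i}$. I would package the right-hand side into a one-step increment network $\phi_{\mathrm{inc},i}$: the identity map contributes $d$, the shift $Z_i\sqrt{t_{i+1}-t_i}$ is a bias, and each of the $n_i:=N(t_{i+1})-N(t_i)$ jump terms is obtained from $\phif{\jump}$ by an affine preprocessing that pads $y$ with the known constants $t_i$ and $\rrho_k$, so it has size at most $\size{\phif{\jump}}$. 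Lemma~\ref{lemma:dnn_sum} then yields
\[
    \size{\phi_{\mathrm{inc},i}} \;\leq\; d + n_i\,\size{\phif{\jump}} + O(d).
\]
Defining $\phix{t_{i+1}}=\phi_{\mathrm{inc},i}\circ\phix{t_i}$ and invoking Lemma~\ref{lemma:dnn_composition} gives
\[
    \size{\phix{t_{i+1}}} \;\leq\; 2\,\size{\phix{t_i}} + 2\,\size{\phi_{\mathrm{inc},i}}.
\]
Unrolling this geometric recursion and using $\sum_j n_j = N_T$ to bound $\sum_{j=0}^{i-1}2^{i-j}n_j$ by $2^i N_T$ delivers the first bound $\size{\phix{t_i}}\leq 2^{i}(3d + N_T\,\size{\phif{\jump}})$.

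For $\jump\in\jumpsettwo$ the situation simplifies drastically, because the jump evaluations $\phif{\jump}(t_i,\rrho_k)$ no longer depend on $\bar X_{t_i}$ and are therefore constants (for fixed $\omega$). The whole trajectory can be written as $\bar X_{t_i}=x+(\text{constant})$, where the constant aggregates at most $N_T$ independent evaluations of $\phif{\jump}$. These evaluations can be assembled in parallel with the identity map via Lemma~\ref{lemma:dnn_sum}, bypassing the compositional doubling from Lemma~\ref{lemma:dnn_composition} entirely; using the crude bound $n_i\leq N_T$ across the $\N$ intervals yields the claimed $d + \N N_T\,\size{\phif{\jump}}$.

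The delicate point is the bookkeeping required to keep the additive constants in the stated form. The $2^i$ growth in the general case is intrinsic because Lemma~\ref{lemma:dnn_composition} doubles the size at every composition and composition cannot be avoided while $\phif{\jump}$ genuinely depends on $\bar X_{t_i}$. The main obstacle is therefore choosing where to place the residual identity contribution $y$ and the scalar bias $Z_i\sqrt{t_{i+1}-t_i}$—whether inside the last affine layer of $\phix{t_i}$ or inside the first layer of $\phi_{\mathrm{inc},i}$—so that the per-step overhead remains $O(d)$ and does not inflate the additive constant from $3d$ upward when propagated through the geometric unfolding.
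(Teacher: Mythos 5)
Your argument mirrors the paper's proof exactly: restrict to the full-probability event of finitely many jumps, take $\phix{t_0}$ to be the identity (size $d$), express the one-step Euler increment as a sum of $\phif{\jump}$ evaluations plus an affine shift (Lemma~\ref{lemma:dnn_sum}), and compose with $\phix{t_i}$ (Lemma~\ref{lemma:dnn_composition}) to get the doubling recursion $\size{\phix{t_{i+1}}}\leq 2\size{\phix{t_i}}+2\size{\phi_{\mathrm{inc},i}}$; the $\jumpsettwo$ case likewise matches the paper's observation that the jump contributions become $x$-independent constants and can be aggregated in parallel. The delicacy you flag in the last paragraph is genuine: the naive unrolling of the recursion with $\size{\phix{t_0}}=d$ and $\size{\phi_{\mathrm{inc},i}}\leq n_i\size{\phif{c}}+2d$ produces an additive constant that grows beyond $3d$ (for instance $a_1\leq 6d+2n_0\size{\phif{c}}$ is fine, but $a_2\leq 16d+\dots$ already exceeds $2^2\cdot 3d$), so the precise placement of the identity and bias contributions you raise as an obstacle is not mere tidiness — as written, neither your sketch nor the paper's own chain of inequalities establishes the $2^i\,3d$ term without a sharper accounting (e.g.\ absorbing the identity pass-through into the composition rather than invoking Lemma~\ref{lemma:dnn_sum} with a separate $d$-size identity network at every step).
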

\begin{proof}
    We consider any realization of the trajectory with a finite number of jumps.
    Such selection covers almost all trajectories, because the L\'evy measure has finite intensity.
    The ReLU DNN $\phix{t_0}(\bar{X}_{t_0})=\bar{X}_{t_0}$ returns argument,
    so this DNN is the identity matrix and $\size{\phix{t_0}}=d$.

    We express $\bar{X}_{t_{i+1}}$ using a ReLU DNN taking $\bar{X}_{t_i}$ as an argument.
    We denote such network as $\phixi{t_{i+1}}$.
    Rewriting~\eqref{discret_x_def} we have
    \[
        \bar{X}_{t_{i+1}} = \bar{X}_{t_i} + Z_i\sqrt{t_{i+1}-t_i} +
        \sum_{N(t_i)<k\leq N(t_{i+1})}
        \phif{c}\at{t_i,\bar{X}_{t_i},\rrho_k}.
    \]
    We place the increment $Z_i\sqrt{t_{i+1}-t_i}$ in the bias in the following DNNs
    \[
        \phi_{I_i}(t,x) =
        \begin{pmatrix}
            0 & I_d
        \end{pmatrix}
        \begin{pmatrix}
            t\\
            x
        \end{pmatrix} + Z_i\sqrt{t_{i+1}-t_i}.
    \]
    Hence,
    \[
        \bar{X}_{t_{i+1}}
        = \phi_{I_i}\at{t_i, \bar{X}_{t_{i+1}}}
            + \sum_{N(t_i)<k\leq N(t_{i+1})}\phif{c}\at{t_i,\bar{X}_{t_i},\rrho_k}.
    \]
    from lemma \ref{lemma:dnn_sum}, we have a ReLU DNN $\phixi{t_{i+1}}$ such that
    \[
        \phixi{t_{i+1}}(t, x)
        = \phi_{I_i}\at{t_i, \bar{X}_{t_{i}}}
        + \sum_{N(t_i)<k\leq N(t_{i+1})}\phif{c}\at{t_i,\bar{X}_{t_i},\rrho_k}.
    \]

    Hence $\size{\phixi{t_{i+1}}}\leq \size{\phi_{I_i}} + \at{N_{t_{i+1}} -N_{t_i}} \size{\phif{c}}\leq\at{N_{t_{i+1}} -N_{t_i}}\size{\phif{c}} + 2d$.
    We define for any $y\in\R^d$
    \[
        \phix{t_{i}}(y)=
        \begin{cases}
            \begin{pmatrix}
                0 & I_d
            \end{pmatrix}
            \begin{pmatrix}
                t_0\\
                x
            \end{pmatrix} & i=0,\\
            \phixi{t_{i}}\at{\phix{t_{i-1}}\at{y}} &i>0.
        \end{cases}
    \]
    Based on lemma \ref{lemma:dnn_composition}, and \ref{lemma:dnn_sum} we have
    \begin{align*}
        \size{\phix{t_{0}}} &= d,\\
        \size{\phix{t_{i+1}}}&\leq
        2\size{\phix{t_{i}}}
        + 2\size{\phixi{t_{i+1}}}\leq 2\size{\phix{t_{i}}} + 2 \at{\at{N_{t_{i+1}} -N_{t_i}}\size{\phif{c}} + 2d}.
    \end{align*}
    Finding a bound on $\size{\phix{t_{i}}}$ using the above recurrent inequality we obtain
    \[
        \size{\phix{t_{i}}}
        \leq 2^{i}\at{3d+(N_{t_i}-N_{t_0})\size{\phif{c}}}
        \leq 2^{i}\at{3d+(N_{t_i}-N_{t_0})\size{\phif{c}}}.
    \]
    This completes the first part of the proof.
    We investigate a special case when $\phif{c}$ is not dependent on the space variable
    (this means that $\jump\in\jumpsettwo$).

    Rewriting \eqref{discret_x_def} while including that $c\in\mathcal{C}_2\at{q, K_\jump,L_\jump,\beta_{\jump},\nu}$  we have
\begin{align*}
    \bar{X}_{t_{i+1}} &= \bar{X}_{t_i} + Z_i\sqrt{t_{i+1}-t_i} +
    \sum_{N(t_i)<k\leq N(t_{i+1})} \phif{c}\at{t_i,\rrho_k}\\
    &= \bar{X}_{t_0} + \sum_{j=1}^{i}\at{
        Z_j\sqrt{t_{j+1}-t_j}}
    +
    \sum_{j=1}^{i}\at{
        \sum_{N(t_j)<k\leq N(t_{j+1})}\phif{c}\at{t_j,\rrho_k}
    }\\
    &= \bar{X}_{t_0} + \sum_{j=1}^{i}\at{
        Z_j\sqrt{t_{j+1}-t_j}
    +
        \sum_{N(t_j)<k\leq N(t_{j+1})}\phif{c}\at{t_j,\rrho_k}
    }
\end{align*}
For a given trajectory, $Z_i\sqrt{t_{i+1}-t_i}$ is a certain constant.
Hence, we think of $\bar{X}_{t_{i+1}}$ as a linear combinations of
$\phif{c}$
(with relevant updated bias in the last layer) and identity DNN. Using lemma \ref{lemma:dnn_sum} we obtain
\[
    \size{\phix{t_{i+1}}} \leq d + i(N_{t_i} - N_{t_0})\size{\phif{c}}\leq d + N_{Euler}N_T\size{\phif{c}}.
\]
The proof is thus completed.

\end{proof}

\let\N\undefined
\let\rrho\undefined

The proofs of next lemmas are straightforward, and for this reason they are included in the appendix.

\begin{lemma}
    \label{bound_for_x_norm}
    Let $X$ meet \eqref{intro.x_t_definition} with $\jump\in\jumpset$ there exist constants
    $K,K'\in(0,\infty)$ dependent only on the parameters of class $\jumpset$ and $T$ such that for any pair,
    $\tx\in[0,T]\times\R^d$ and $r\in[t,T]$ it holds
    \begin{equation*}
        \E_{\tx}\norm{X_r}^2 \leq K\at{1+\norm{x}^\p}.
    \end{equation*}
\end{lemma}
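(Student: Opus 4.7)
The plan is to derive an integral inequality for $\E_{\tx}\|X_r\|^2$ and then close it with Grönwall.

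First I would write the SDE in integrated form
\[
X_r = x + (W_r - W_t) + \int_t^r\!\int_{\Z} \jump(s, X_{s-}, z)\,\dismu(\dd s, \dd z),
\]
then apply the elementary inequality $\|a+b+c\|^2 \le 3(\|a\|^2 + \|b\|^2 + \|c\|^2)$ and take $\E_{\tx}$ of both sides. The first term gives $3\|x\|^2$, the Wiener term gives $3d(r-t) \le 3dT$, so the work is concentrated on the jump term.

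For the jump term I would invoke the $L^2$-isometry for integrals against a compensated Poisson random measure (which holds under assumption \ref{c_1} via $q>2$), yielding
\[
\E_{\tx}\Bigl\|\int_t^r\!\int_{\Z} \jump(s, X_{s-}, z)\,\dismu(\dd s, \dd z)\Bigr\|^2
= \E_{\tx}\int_t^r\!\int_{\Z}\|\jump(s,X_{s-},z)\|^2\,\nu(\dd z)\,\dd s.
\]
Using the already-established growth bound \eqref{jump_int_norm} together with $(1+\|y\|)^2 \le 2(1+\|y\|^2)$, I would dominate the inner integral by a constant multiple of $1+\|X_{s-}\|^2$. Combining the three pieces produces an inequality of the form
\[
\E_{\tx}\|X_r\|^2 \le C_1(1+\|x\|^2) + C_2\int_t^r \E_{\tx}\|X_s\|^2\,\dd s,
\]
where $C_1, C_2$ depend only on $d$, $T$, $K_c$, $L_c$, $\beta_c$ (and implicitly on $\nu$).

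I would finish by applying Grönwall's inequality on $[t,T]$ to obtain $\E_{\tx}\|X_r\|^2 \le C_1 e^{C_2 T}(1+\|x\|^2)$, so the constant $K$ in the statement can be taken as $C_1 e^{C_2 T}$. The main (and really only) subtlety is the rigorous justification of the isometry, which requires knowing that the stochastic integral is a genuine $L^2$-martingale; a standard localization argument (stopping at $\tau_n = \inf\{s : \|X_s\| \ge n\}$, applying the bound on the stopped process, and passing to the limit with Fatou's lemma) handles this and is what I would gesture at if full rigor is needed.
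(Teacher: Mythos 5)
Your proposal matches the paper's proof in all essentials: same integrated form of the SDE, same elementary square bound on the three terms, same reduction of the jump term to $\E\int\!\int\|\jump\|^2\,\dd\nu\,\dd s$ (the paper cites Kunita's inequality, which at $\p=2$ is just the isometry you invoke), same use of the growth bound~\eqref{jump_int_norm}, and the same Grönwall closing step. The only cosmetic differences are the $3(\cdot)$ versus iterated $2(\cdot)$ square bound and your explicit mention of localization, which the paper leaves implicit under~\ref{c_1}.
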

The consequence of the lemma above is the following inequality
\begin{equation}
    \label{bound_1_x_norm}
    \E\at{1 + \norm{X^{\tx}_r}}^2 \leq K'\at{1+\norm{x}^\p},
\end{equation}
for a certain constant $K'>0$.

\begin{lemma}
    \label{bound_for_x_diff_norm}
    Let $X$ satisfy \eqref{intro.x_t_definition}.
    There exists a constant
    $K\in(0,\infty)$
    dependent only on the parameters of class $\jumpset$ and $T$ such that for any pair
    $\tx$ and $s_1>s_2\geq t$ it holds
    \begin{equation*}
        \E_{\tx}\norm{X_{s_1} - X_{s_2}}^2 \leq K(s_1-s_2)(1 + \norm{x}^2).
    \end{equation*}
\end{lemma}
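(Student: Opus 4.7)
The plan is to decompose the increment $X_{s_1}-X_{s_2}$ into its Wiener part and compensated jump part using the SDE \eqref{intro.x_t_definition}, bound each one separately by standard isometries, and then invoke Lemma~\ref{bound_for_x_norm} to control the resulting space integral. Concretely, writing
\[
X_{s_1}-X_{s_2} \;=\; (W_{s_1}-W_{s_2}) \;+\; \int_{s_2}^{s_1}\!\!\int_{\Z} c(r,X_r,z)\,\dismu(\dd r,\dd z),
\]
the elementary inequality $\norm{a+b}^2\leq 2\norm{a}^2+2\norm{b}^2$ reduces the goal to bounding two conditional expectations.

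The Wiener contribution is immediate: under $\Pra_{\tx}$ the increment $W_{s_1}-W_{s_2}$ is independent of $\Sigma_{s_2}$ with covariance $(s_1-s_2)I_d$, so $\E_{\tx}\norm{W_{s_1}-W_{s_2}}^2=d(s_1-s_2)$, which already fits the asserted form. For the jump part I would apply the It\^o isometry for compensated Poisson integrals, using that the moment condition \ref{c_1} (with $q>2$) guarantees it is applicable, to obtain
\[
\E_{\tx}\norm{\int_{s_2}^{s_1}\!\!\int_{\Z} c(r,X_r,z)\,\dismu(\dd r,\dd z)}^{\!2}
\;=\; \E_{\tx}\!\int_{s_2}^{s_1}\!\!\int_{\Z}\norm{c(r,X_r,z)}^{2}\nu(\dd z)\,\dd r.
\]
Then inequality~\eqref{jump_int_norm} (which holds since $\p=2$) gives the pointwise-in-$r$ bound $\int_{\Z}\norm{c(r,X_r,z)}^{2}\nu(\dd z)\leq M^2(1+\norm{X_r})^2\leq 2M^2(1+\norm{X_r}^2)$ with $M=\max\{L_c,L_cT^{\beta_c}+K_c\}$.

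Finally, I would pull the expectation inside the time integral (Tonelli), apply Lemma~\ref{bound_for_x_norm} to get $\E_{\tx}\norm{X_r}^2\leq K(1+\norm{x}^2)$ uniformly in $r\in[t,T]$, and integrate over $[s_2,s_1]$ to produce an upper bound of the form $C(s_1-s_2)(1+\norm{x}^2)$. Combining this with the Wiener contribution $d(s_1-s_2)\leq d(s_1-s_2)(1+\norm{x}^2)$ yields the stated inequality with a constant depending only on $d$, $T$, and the parameters of $\jumpset$. There is no real obstacle here — the only point requiring a little care is the justification of the It\^o isometry, which is why assumption~\ref{c_1} was imposed; everything else is a direct chaining of the preceding lemma and the moment bound~\eqref{jump_int_norm}.
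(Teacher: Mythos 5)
Your proposal is correct and follows essentially the same route as the paper: the same decomposition of $X_{s_1}-X_{s_2}$ into the Wiener increment and the compensated jump integral, the same use of the bound \eqref{jump_int_norm}, Fubini/Tonelli, and Lemma~\ref{bound_for_x_norm} to close the estimate. The only cosmetic difference is that you invoke the It\^o isometry for the compensated Poisson integral where the paper applies Kunita's inequality, which for the exponent $2$ yields the same bound up to a constant.
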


To calculate how "precise" is our approximation defined by $\XX$, we call the following lemma.
\newcommand{\sdeschemaerrorc}{
    C_1 N_{Euler}^{-2\beta_\jump}
    \at{1+\norm{x}^\p}
}
\begin{lemma}
    \label{sde.schema.error}
    Let $X$ satisfy~\eqref{intro.x_t_definition} and $\bar{X}$ satisfy~\eqref{discret_x_def}.
    There exist constants $C_1,C_2>0$ dependent only on the parameter of classes $\jumpset$, $T$ and
    \begin{equation}
        \label{eq:delta_c_bound}
        \delta_\jump\leq \at{\frac{T}{N_{Euler}}}^{2\beta_\jump}.
    \end{equation}
    such that for any pair $\tx$ and any time $s$
    \begin{equation*}
        \E\norm{
            \X_s
            - \XX_s
        }^2
        \leq
        \sdeschemaerrorc.
    \end{equation*}
\end{lemma}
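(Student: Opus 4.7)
The plan is to couple $\X$ and $\XX$ on the same probability space via a shared Brownian motion and Poisson random measure, so that the Gaussian increments cancel pathwise and the entire discrepancy stems from the jump integral. Let $e_s := \X_s - \XX_s$ and $\eta(r) := \max\{t_j : t_j \leq r\}$. Since $\mu$ is constructed from the sequence $(\tau_k, \rho_k)$, the discretized jump sum in \eqref{discret_x_def} rewrites as $\int_t^s\!\int_\Z \phif{\jump}(\eta(r),\XX_{\eta(r)}, z - E_z)\,\mu(\dd z,\dd r)$, and subtracting its compensator against $\nu(\dd z)\,\dd r$ produces a clean splitting $e_s = M_s - D_s$, where
\[
M_s = \int_t^s\!\!\int_\Z \psi(r,z)\,\dismu(\dd z,\dd r),\qquad \psi(r,z) := c(r, X_r, z) - \phif{\jump}\at{\eta(r), \XX_{\eta(r)}, z - E_z},
\]
and $D_s$ is the remaining Lebesgue integral against $\nu(\dd z)\,\dd r$.

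The bound on $\E\norm{M_s}^2$ would follow from It\^o isometry, $\E\norm{M_s}^2 = \E\int_t^s\!\int_\Z \norm{\psi(r,z)}^2\nu(\dd z)\,\dd r$, after which I would telescope $\psi$ into three pieces. The time-freeze term $c(r, X_r, z) - c(\eta(r), X_r, z)$ is controlled by \ref{c_4} combined with Lemma \ref{bound_for_x_norm}, contributing $C(1+\norm{x}^{\p})(T/N_{Euler})^{2\beta_\jump}$ after integrating in $r$. The space-freeze term $c(\eta(r), X_r, z) - c(\eta(r), \XX_{\eta(r)}, z)$ is controlled by \ref{c_3}, which yields both a non-recursive piece $L_\jump^2 \E\norm{X_r - X_{\eta(r)}}^2 \leq L_\jump^2 K (r-\eta(r))(1+\norm{x}^2)$ handled by Lemma \ref{bound_for_x_diff_norm}, and a recursive piece proportional to $\E\norm{e_{\eta(r)}}^2$. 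The DNN-plus-shift term $c(\eta(r), \XX_{\eta(r)}, z) - \phif{\jump}(\eta(r), \XX_{\eta(r)}, z - E_z)$ is split further by inserting the intermediate $\phif{\jump}(\eta(r), \XX_{\eta(r)}, z)$, and estimated via \eqref{D1} together with the hypothesis \eqref{eq:delta_c_bound}. The drift term $D_s$ is treated in the same spirit, using Cauchy--Schwarz in $r$ followed by an analogous telescoping.

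Collecting the estimates produces an integral inequality
\[
\E\norm{e_s}^2 \leq C'(1+\norm{x}^{\p})\,N_{Euler}^{-2\beta_\jump} + C''\!\int_t^s \E\norm{e_{\eta(r)}}^2\,\dd r,
\]
from which Gr\"onwall's lemma --- first applied discretely to iterate over the grid, then extended to arbitrary $s \in [t,T]$ --- yields the claimed bound, with constants depending only on $T$ and the parameters of $\jumpset$. The hard part will be the bookkeeping around the $z \mapsto z - E_z$ shift inside $\phif{\jump}$: the approximation guarantee \eqref{D1} is formulated in the natural $z$-variable, whereas the scheme evaluates the DNN at a shifted jump size, so the residual $\phif{\jump}(\cdot, z) - \phif{\jump}(\cdot, z - E_z)$ must be absorbed by a triangle-inequality detour through $c(\cdot, z)$ and $c(\cdot, z - E_z)$, with the latter difference controlled via \ref{c_2}--\ref{c_4}. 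A secondary subtlety is uniform second-moment control on $\XX_{\eta(r)}$, which I would derive in parallel with the error recursion using the same scheme bounds and the hypothesis on $\delta_\jump$.
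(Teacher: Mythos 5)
Your decomposition --- couple the Brownian parts so they cancel pathwise, isolate the jump discrepancy, telescope it into a time-freeze piece (handled via \ref{c_4}), a space-freeze piece (handled via \ref{c_3} and giving the recursive term), and a DNN-approximation piece (handled via \eqref{D1}), then bound via Kunita/It\^o isometry and close with Gr\"onwall --- is exactly the skeleton of the paper's proof. (The paper does not explicitly separate a drift term $D_s$: it works directly with the compensated integral and applies Kunita's $p$-moment inequality, which at $p=2$ coincides with your It\^o-isometry argument, so this is a cosmetic difference.) So the high-level approach matches.

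Where you go beyond the paper is in flagging the $z \mapsto z - E_z$ shift, and here you have correctly located something the paper's proof glosses over: going from \eqref{_lemma_error} to \eqref{_lemma_4_for_kunita}, the paper simply asserts that the sum $\sum_k \phi_{\jump,\delta_\jump}(t_i, \bar X_{t_i}, \tilde\rho_k)$ ``can be expressed in integral form'' as $\int \phi_{\jump,\delta_\jump}(\cdot,z)\,\tilde\mu(\dd z,\dd r)$ in the \emph{untranslated} variable, which is not an identity (it fails even for $\phi_{\jump,\delta_\jump}$ affine in $z$). However, the fix you sketch does not close the gap either. You propose to control $\phi_{\jump,\delta_\jump}(\cdot,z)-\phi_{\jump,\delta_\jump}(\cdot,z-E_z)$ by detouring through $c(\cdot,z)$ and $c(\cdot,z-E_z)$, with the middle difference ``controlled via \ref{c_2}--\ref{c_4}.'' But \ref{c_2} is a pointwise bound at the origin, \ref{c_3} is Lipschitz in $x$, and \ref{c_4} is H\"older in $t$: none of them say anything about the dependence of $c$ on the jump-size variable $z$, and the paper assumes no such regularity. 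Likewise, \eqref{D1} bounds $\int_\Z\|c-\phi_{\jump,\delta_\jump}\|^2\nu(\dd z)$ against $\nu$ in the natural $z$; applying it in the shifted variable would require the pushforward of $\nu$ under $z\mapsto z-E_z$ to be comparable to $\nu$, which is not assumed. So neither leg of your triangle detour can be estimated with the tools available; you have identified a real hole (shared by the paper's own proof), but the proposed patch leaves it open.
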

The proof is moved to the appendix.
Further in the article we use this quick lemma.
\begin{lemma}
    \label{second_part_with_N}
    Let $N_T$ be the number of jumps up to time $T$ and let $M$ be a positive integer, and
    for $i=1,\dots,M$ $N^(i)$ has the same independent distributions as $N_T$
    then
    \begin{equation*}
        \left\vert
        \E\sqrt{N_T}-\frac{1}{M}\sum_{i=1}^M \sqrt{N_T(i)}
        \right\vert^2\leq\frac{T\lambda}{M}
    \end{equation*}
\end{lemma}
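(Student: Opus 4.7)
The plan is to read the left-hand side as the mean squared error of a standard Monte Carlo estimator (so one should take an expectation on the outside; the statement as displayed is missing it, but only with that reading does the inequality make sense because the sample mean is random). Set $\mu=\E\sqrt{N_T}$ and $\bar Y_M=\frac{1}{M}\sum_{i=1}^M\sqrt{N_T(i)}$. Since the $N_T(i)$ are i.i.d.\ copies of $N_T$, so are the $\sqrt{N_T(i)}$, and $\E\bar Y_M=\mu$.

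First I would use the classical identity for the variance of a sample mean of i.i.d.\ random variables:
\[
\E\modul{\mu-\bar Y_M}^2=\Var(\bar Y_M)=\frac{1}{M}\Var\bigl(\sqrt{N_T}\bigr).
\]
Next I would bound the variance by the raw second moment, $\Var(\sqrt{N_T})\leq \E\bigl[(\sqrt{N_T})^2\bigr]=\E N_T$. Finally, because $N$ is a Poisson process with intensity $\lambda$, one has $\E N_T=\lambda T$, and combining the three lines yields exactly $T\lambda/M$.

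There is essentially no obstacle here; the only subtlety is the implicit outer expectation on the left-hand side, and then the argument is just the textbook Monte Carlo variance calculation together with the trivial inequality $\Var(X)\leq\E X^2$ applied to $X=\sqrt{N_T}$. No properties of the Poisson distribution beyond its mean are needed, and no appeal to the earlier SDE lemmas is required.
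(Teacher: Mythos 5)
Your proof matches the paper's own argument exactly: both identify the left-hand side (with the implicit outer expectation, which the lemma statement indeed omits) as the Monte Carlo mean-squared error $\frac{1}{M}\Var\bigl(\sqrt{N_T}\bigr)$, bound the variance by the second moment $\E N_T$, and use $\E N_T = \lambda T$ for the Poisson process. You also correctly flag the missing outer expectation in the lemma's display, which the paper silently restores in its proof.
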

\begin{proof}
    Using inequality $\Var{X}\leq \E X^2$ we obtain
    \begin{equation*}
        \E
        \left\vert
        \sqrt{N_T}
        -\frac{1}{M}\sum_i^M
        \sqrt{N_T(i)}
        \right\vert^2
        =\frac{1}{M}\Var{\sqrt{N_T}}
        \leq
        \frac{T\lambda}{M}.
    \end{equation*}
\end{proof}

        \newcommand{\N}{N_{euler}}
\let\X\undefined
\newcommand{\X}{X^{\tx}}
\newcommand{\Su}{\sum_{i}^{\N - 1}}

\newcommand{\intschemanumerator}{
    T^{2\alpha +4}
    L_b^2(1+K\norm{x})
}
\newcommand{\intschemadominator}{
    2\N^{2\alpha+3}(2\alpha+1)
}
\newcommand{\intschemac}{
    K\at{L_bT}^2
    \at{\frac{T}{\N}}^{2+\min\{\beta,2\alpha\}}
    \at{1+\norm{x}^2}^\beta
}
\newcommand{\intschemacnotime}{
    K
    {\N}^{-2-\min\{\beta,2\alpha\}}
    \at{1+\norm{x}^2}^\beta
}

We present two lemmas that aid in the approximation of the second expected value in \eqref{eq:u_decompositin}.
Since the expected value involves an integral,
we first address a straightforward quadrature method,
followed by an analysis of the error introduced while employing the discretized version of the process $X$.
The proofs of the following two lemmas may be found in the appendix.
\begin{lemma}
    \label{integral_schema}
    Let $b$ satisfy \ref{B2} and $T/\N\leq 1$
    then for any pair $\tx$
    \begin{equation*}
        \modul{
            \E_{\tx}\int_t^T b\at{s, X_s}\dd s
            - \E_{\tx}
            \sum_{i}^{N_{euler} -1}
            b\at{t_i, X_{t_i}} (t_{i+1} - t_i)
        }^2
        \leq\intschemac,
    \end{equation*}
    where $t_i = iT/\N$.
\end{lemma}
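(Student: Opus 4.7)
The plan is to split the error interval-by-interval over the grid $t_i=iT/N_{euler}$, use the H\"older regularity \ref{B2} of $b$ on each subinterval, and close with the second-moment increment estimate of Lemma~\ref{bound_for_x_diff_norm}. I would begin by writing
\[
\E_{\tx}\int_t^T b(s, X_s)\,\dd s - \E_{\tx}\sum_{i=0}^{N_{euler}-1} b(t_i, X_{t_i})(t_{i+1} - t_i) = \sum_i \E_{\tx}\int_{t_i}^{t_{i+1}}\bigl[b(s, X_s) - b(t_i, X_{t_i})\bigr]\,\dd s,
\]
(the initial and terminal endpoints need a small bookkeeping step to fit the grid, but nothing more), and then pull the absolute value inside the sum, the integral, and the expectation using Jensen's inequality.

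On each subinterval, \ref{B2} gives
\[
|b(s, X_s) - b(t_i, X_{t_i})| \leq L_b (s - t_i)^\alpha + L_b \|X_s - X_{t_i}\|^\beta.
\]
For the spatial piece I would apply Jensen's inequality to the concave map $y\mapsto y^{\beta/2}$ (valid since $\beta\in(0,1]$) and then invoke Lemma~\ref{bound_for_x_diff_norm} to get
\[
\E\|X_s - X_{t_i}\|^\beta \leq \bigl(\E\|X_s - X_{t_i}\|^2\bigr)^{\beta/2} \leq K^{\beta/2}(s - t_i)^{\beta/2}(1 + \|x\|^2)^{\beta/2}.
\]
Integrating over a subinterval of length $T/N_{euler}$ and then summing the $N_{euler}$ pieces accounts for the $(1 + \|x\|^2)^\beta$ growth factor and the $(L_b T)^2$ prefactor once everything is squared.

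The delicate step is reaching the exponent $2+\min\{\beta,2\alpha\}$ on $T/N_{euler}$; a direct triangle-inequality sweep only produces $\min\{\beta,2\alpha\}$ after squaring. To extract the extra two powers of $T/N_{euler}$ I would apply Cauchy--Schwarz at the integral level, $\bigl|\int_{t_i}^{t_{i+1}} f(s)\,\dd s\bigr|^2 \leq (t_{i+1}-t_i)\int_{t_i}^{t_{i+1}}|f(s)|^2\,\dd s$, combined with the companion inequality $|\sum_i a_i|^2\leq N_{euler}\sum_i|a_i|^2$ on the outer sum, so that two factors of $T/N_{euler}$ survive the accounting. Then $(a+b)^2\leq 2(a^2+b^2)$ together with the hypothesis $T/N_{euler}\leq 1$ collapses the two H\"older contributions under a single $\min$. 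The main obstacle is precisely this bookkeeping: arranging the powers of $T$, $L_b$, and $N_{euler}$ to aggregate exactly into the stated $K(L_bT)^2(T/N_{euler})^{2+\min\{\beta,2\alpha\}}(1+\|x\|^2)^\beta$ form, rather than a weaker exponent.
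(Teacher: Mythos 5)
Your decomposition into subintervals, the use of \ref{B2}, Jensen's inequality for the concave map $y \mapsto y^{\beta/2}$, and the invocation of Lemma~\ref{bound_for_x_diff_norm} all mirror the paper's own proof. The problem lies in the step you yourself flag as delicate. Run the bookkeeping on the Cauchy--Schwarz plan: the inner application gives $\bigl|\int_{t_i}^{t_{i+1}} f\,\dd s\bigr|^2 \leq (T/N_{Euler})\int_{t_i}^{t_{i+1}} |f|^2\,\dd s$, and the outer one gives $\bigl|\sum_i a_i\bigr|^2 \leq N_{Euler}\sum_i |a_i|^2$. The two prefactors $N_{Euler}$ and $T/N_{Euler}$ collapse to $T$, and summing the per-subinterval integrals of $|f|^2$ over all $N_{Euler}$ pieces contributes another factor of order $T\cdot(T/N_{Euler})^{\min\{2\alpha,\beta\}}$. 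The total is of order $T^2(T/N_{Euler})^{\min\{\beta,2\alpha\}}(1+\norm{x}^2)^\beta$, which is exactly the same exponent the naive triangle-inequality computation already delivered. Cauchy--Schwarz is a wash: what you gain on the inner integral you give back on the outer sum, so the extra two powers of $T/N_{Euler}$ you hoped to extract do not materialize.

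In fact they cannot materialize from this kind of argument: even for a deterministic H\"older-$\gamma$ integrand, the left-endpoint quadrature error on $[0,T]$ with mesh $\Delta$ is of order $T\Delta^\gamma$, so its square is $T^2\Delta^{2\gamma}$ and not $T^2\Delta^{2\gamma+2}$. The paper's own derivation introduces the surplus $(T/N_{Euler})^2$ by a slip at the displayed step \eqref{inner_is_3}: a prefactor $\frac{L_b T}{N_{Euler}}$ is pulled out of the sum where only $L_b$ belongs, without dividing the bracketed terms by $T/N_{Euler}$ to compensate, which is an unearned factor of $T/N_{Euler}$ before squaring. Your first instinct---that the direct estimate yields exponent $\min\{\beta,2\alpha\}$---is the correct outcome of this line of argument; the remaining discrepancy is an error in the paper's stated exponent, not a gap your method was supposed to bridge.
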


\let\X\undefined
\let\Su\undefined
\let\N\undefined
\let\lemmaCC\undefined
For completeness, we add that if $\frac{T}{N_{euler}}>1$ then the exponent $2 +\min\{\beta, 2\alpha\}$
should be replaced by $2 +\max\{\beta, 2\alpha\}$.

        \let\XX\undefined
\let\dti\undefined
\let\X\undefined
\let\Su\undefined
\let\N\undefined

\newcommand{\X}{X^{\tx}}
\newcommand{\XX}{\bar{X}^{\tx}}
\newcommand{\N}{N_{Euler}}
\newcommand{\Su}{\sum_{i}^{\N - 1}}
\newcommand{\dti}{(t_{i + 1} - t_{i})}

\newcommand{\disruptiveschemac}{
    K
    \N^{-2\beta\beta_\jump}
    \at{1+\norm{x}^\p}^{\beta}
}

\begin{lemma}
    \label{disruptive_schema}
    Let $b$ satisfy \ref{B2}.
    There exists a constant $K$
    such that for any pair $\tx\in[0,T]\times\R^d$
    \begin{align*}
        \modul{
            \E\Su
            b\at{t_i, \X_{t_i}}\dti
            -\E\Su
            b\at{t_i, \XX_{t_i}}\dti
        }^2
        \leq\\
        \disruptiveschemac.
    \end{align*}
\end{lemma}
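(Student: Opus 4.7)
The plan is to push the absolute value inside the expectation and the sum using the triangle inequality, then exploit the H\"older regularity of $b$ together with the mean-square discretization bound from Lemma~\ref{sde.schema.error}. The argument is essentially a one-line chain of inequalities dressed up with Jensen to handle the fractional H\"older exponent.

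First, by linearity of expectation and the triangle inequality, the quantity inside the absolute value is bounded by
\[
    \sum_{i}^{\N-1} (t_{i+1}-t_i)\,\E\bigl|b(t_i, \X_{t_i}) - b(t_i, \XX_{t_i})\bigr|.
\]
Next, I apply \ref{B2}, which yields $|b(t_i,\X_{t_i}) - b(t_i,\XX_{t_i})| \leq L_b \|\X_{t_i} - \XX_{t_i}\|^{\beta}$. Since $\beta\in(0,1]$, the function $y\mapsto y^{\beta/2}$ is concave on $[0,\infty)$, so Jensen's inequality gives
\[
    \E\|\X_{t_i} - \XX_{t_i}\|^{\beta}
    = \E\bigl(\|\X_{t_i} - \XX_{t_i}\|^2\bigr)^{\beta/2}
    \leq \bigl(\E\|\X_{t_i} - \XX_{t_i}\|^2\bigr)^{\beta/2}.
\]

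Now I invoke Lemma~\ref{sde.schema.error}, which controls the inner expectation by $C_1 \N^{-2\beta_\jump}(1+\|x\|^{\p})$. Raising to the power $\beta/2$ and summing $(t_{i+1}-t_i)$ over $i$ (which yields $T$), I obtain
\[
    \Bigl|\E\Su b(t_i,\X_{t_i})\dti - \E\Su b(t_i,\XX_{t_i})\dti\Bigr|
    \leq T L_b C_1^{\beta/2}\,\N^{-\beta\beta_\jump}\bigl(1+\|x\|^{\p}\bigr)^{\beta/2}.
\]
Squaring both sides produces a bound of the form $K\,\N^{-2\beta\beta_\jump}(1+\|x\|^{\p})^{\beta}$ with $K=T^2 L_b^2 C_1^{\beta}$, which depends only on $T$ and the class parameters, matching the claim.

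There is no real obstacle here; the one spot that deserves a moment of care is the direction of Jensen's inequality. Because we only have H\"older (not Lipschitz) regularity of $b$, we end up controlling $\E\|\X_{t_i} - \XX_{t_i}\|^{\beta}$ rather than $\E\|\X_{t_i} - \XX_{t_i}\|^2$, and it is the concavity of $y^{\beta/2}$ (since $\beta/2\leq 1/2$) that lets us reduce the former to the latter. Everything else is routine bookkeeping of constants.
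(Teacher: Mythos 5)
Your proof is correct and follows essentially the same route as the paper: triangle inequality, the H\"older condition \ref{B2}, Jensen to trade the $\beta$-power of the norm for a power of the mean-square error, then Lemma~\ref{sde.schema.error}. One note worth making: your application of Jensen, $\E\|\cdot\|^{\beta}=\E(\|\cdot\|^2)^{\beta/2}\le(\E\|\cdot\|^2)^{\beta/2}$, is the clean and correct form; the paper's displayed chain $\E\|\cdot\|^{\beta}\le(\E\|\cdot\|)^{\beta}\le(\E\|\cdot\|^2)^{\beta}$ has the wrong exponent in its last step (Cauchy--Schwarz gives $(\E\|\cdot\|^2)^{\beta/2}$, not $(\E\|\cdot\|^2)^{\beta}$), yet after the subsequent simplifications the paper states the same final bound you obtain. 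So your write-up is the tidier version of the intended argument.
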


\let\XX\undefined
\let\dti\undefined
\let\X\undefined
\let\Su\undefined
\let\N\undefined
        \let\XX\undefined
        \let\X\undefined

        \subsection{Approximation by deep neural networks}
        \label{subsec:approximation_methgod}
        \newcommand{\X}{\bar{X}}
In this section, we show that there exists a DNNs approximating $\E_{\tx}g\at{X_T}$ and $\E_{\tx}\int_t^T b(s,\X_s)\dd s$.
The idea of the proof is as follows: find a bound on the expected values of the error,
then select relevant trajectories of the process $\bar{X}$,
finally analyze the network's size.

\begin{lemma}
    \label{lemma:g_DNN}
    Let $g\in\mathcal{G}\at{K_g, L_g}$ and assume there exists a relu DNN $\phif{g}$ such that
    $
    \norm{
        g
        - \phif{g}
    }_{\infty}
    \leq \delta_g.
    $
    Then for any $\bar{\delta}\in(0,1)$ and any apriori distributions of $x$ with density $f$ with the finite second moment,
    exists a relu DNN $\phi_{1,\bar{\delta}}$ such that
    \begin{equation}
        \label{g_lemma:thesis}
        \sqrt{
            \int_{0}^T
            \int_{\R^d}
            \left\vert
            \E\at{
                g\at{X_T^{\tx}}
            }
            - \phi_{1,\bar{\delta}}\tx
            \right\vert^2
            f(x)
            \dd x
            \dd t
        }
        \leq \bar{\delta}.
    \end{equation}
    Moreover, there exist certain constants $C>0$ and $C'>0$ such that
    $M = \left\lceil C\bar{\delta}^{-2} \right\rceil$,
    $N_{Euler}\ge\left\lceil
    C\bar{\delta}^{-1/\beta_c}
    \right\rceil $ and
    realization of the normal distributions
    $Z_{0}(\omega_i),\dots,Z_{N_{euler}}(\omega_i)$,
    realization of the Poisson process
    $N_{t_0}(\omega_i),\dots,N_{T}(\omega_i)$ and
    $\rho_1(\omega_i),\dots,\rho_{N_{T}(\omega_i)}(\omega_i)$
    for $i=1,\dots,M$ such that

    \begin{equation}
        \label{definition:g_dnn}
        \phi_{1,\bar{\delta}}\tx =
        \frac{1}{M}\sum_{i=1}^M
        \phif{g}\at{
            \bar{X}_T^{\tx}(\omega_i)
        },
    \end{equation}
    where $\delta_g \leq \frac{\bar{\delta}}{\sqrt{C'}}$ and
    \begin{equation*}
        \bar{X}_T^{\tx}(\omega_i) = \bar{X}_T\at{
            t,x,
            Z_{0}(\omega_i),\dots,Z_{N_{euler}}(\omega_i),
            N_{t_0}(\omega_i),\dots,N_{T}(\omega_i),
            \rho_1(\omega_i),\dots,\rho_{N_{T}(\omega_i)}(\omega_i)
        }.
    \end{equation*}
    Moreover, there is satisfied
    \begin{equation}
        \label{g_lemma:sumN}
        \sum_{i=1}^M N_{T}(\omega_i)\leq 4M^2T\lambda.
    \end{equation}
\end{lemma}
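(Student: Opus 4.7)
The plan is to control the error via a triangle inequality into three pieces: a discretization error, a network substitution error, and a Monte Carlo sampling error. Writing $\bar{X}_T = \bar{X}_T^{\tx}$ for brevity, set
\begin{align*}
    \Delta_1\tx &= \E g\at{X_T^{\tx}} - \E g\at{\bar{X}_T^{\tx}},\\
    \Delta_2\tx &= \E g\at{\bar{X}_T^{\tx}} - \E \phif{g}\at{\bar{X}_T^{\tx}},\\
    \Delta_3\at{\tx;\omega} &= \E \phif{g}\at{\bar{X}_T^{\tx}} - \frac{1}{M}\sum_{i=1}^M \phif{g}\at{\bar{X}_T^{\tx}(\omega_i)},
\end{align*}
so that the target integrand is bounded pointwise by $3\at{\Delta_1^2 + \Delta_2^2 + \Delta_3^2}$. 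It therefore suffices to make each of the three $f$-weighted $L^2$ quantities of order $\bar{\delta}^2$.

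For $\Delta_1$, Jensen together with the Lipschitz property \ref{G2} gives $\modul{\Delta_1}^2\leq L_g^2\,\E\norm{X_T^{\tx}-\bar{X}_T^{\tx}}^2$, and Lemma~\ref{sde.schema.error} bounds the latter by $C L_g^2 N_{Euler}^{-2\beta_\jump}\at{1+\norm{x}^\p}$; integrating against $f(x)\dd x$ (finite second moment by assumption) yields an $L^2$ contribution of order $N_{Euler}^{-2\beta_\jump}$. For $\Delta_2$, the assumed uniform bound gives $\modul{\Delta_2}\leq \delta_g$ pointwise, independently of $\tx$. For $\Delta_3$, the $\omega_i$ are i.i.d., so the standard Monte Carlo variance bound gives $\E_\omega\modul{\Delta_3}^2\leq M^{-1}\,\E\modul{\phif{g}\at{\bar{X}_T^{\tx}}}^2$; here the key point is that \ref{G3} together with $\norm{g-\phif{g}}_\infty\leq\delta_g$ makes $\phif{g}$ uniformly bounded by $\norm{g}_\infty+\delta_g$, producing a dimension-free bound of order $1/M$.

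Choosing $M=\lceil C\bar{\delta}^{-2}\rceil$, $N_{Euler}\geq\lceil C\bar{\delta}^{-1/\beta_\jump}\rceil$ and $\delta_g\leq\bar{\delta}/\sqrt{C'}$ with suitable constants, each of the three squared contributions to the integral is at most $\bar{\delta}^2/12$ in expectation over $\omega$. A probabilistic selection then finishes the argument: by Markov, the event that the $\omega$-random squared error exceeds $\bar{\delta}^2$ has probability at most $1/4$, and since $\E\sum_{i=1}^M N_T(\omega_i)=MT\lambda$, another Markov bound yields $\Pra\at{\sum_i N_T(\omega_i)>4M^2T\lambda}\leq 1/(4M)\leq 1/4$. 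A union bound leaves a positive-probability set of realizations satisfying both the error bound and~\eqref{g_lemma:sumN}; fix one such $(\omega_i)_{i=1}^M$. Lemma~\ref{lemma:x_net_size} then expresses each map $\tx\mapsto\bar{X}_T^{\tx}(\omega_i)$ as a ReLU DNN; composing with $\phif{g}$ via Lemma~\ref{lemma:dnn_composition} and averaging via Lemma~\ref{lemma:dnn_sum} delivers the network~\eqref{definition:g_dnn}.

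The main obstacle is the joint selection of realizations satisfying both the $L^2$ accuracy and the jump-count constraint~\eqref{g_lemma:sumN}. The latter is not needed to obtain accuracy in \eqref{g_lemma:thesis}, but is essential in subsequent lemmas to keep $\size{\phi_{1,\bar{\delta}}}$ tractable through Lemma~\ref{lemma:x_net_size}, whose size estimate grows with the realized $N_T(\omega_i)$. Once the three error pieces are separately bounded, the rest is a clean triangle-plus-Markov-plus-union-bound argument built from the lemmas already established.
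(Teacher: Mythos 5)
Your proof is correct and follows the same overall architecture as the paper's: split into discretization bias, network-substitution bias, and Monte Carlo variance; bound each piece via Lemma~\ref{sde.schema.error}, the uniform bound, and the i.i.d.\ variance estimate; then select a good realization $(\omega_i)$ and invoke Lemmas~\ref{lemma:x_net_size}, \ref{lemma:dnn_composition} and \ref{lemma:dnn_sum}. Where you genuinely diverge is the joint selection mechanism. The paper augments the $f$-weighted $L^2$ error by an auxiliary term $\left\vert \E\sqrt{N_T}-\frac{1}{M}\sum_i\sqrt{N_T(\omega_i)}\right\vert^2$, bounds the sum's expectation via Lemma~\ref{second_part_with_N}, applies a first-moment argument to find a realization where the \emph{combined} quantity is below its mean, and then recovers the jump-count bound~\eqref{g_lemma:sumN} from the $\sqrt{N_T}$ term through a subadditivity-of-the-square-root manipulation. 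You instead leave the two constraints separate and apply Markov twice -- once to the $\omega$-random $L^2$ error, once directly to $\sum_{i}N_T(\omega_i)$ using $\E\sum_i N_T(\omega_i)=MT\lambda$ -- followed by a union bound. Your route is more elementary and arguably cleaner: it avoids the detour through $\sqrt{N_T}$ entirely, makes the role of $M$ in $\Pra\bigl(\sum_i N_T(\omega_i)>4M^2T\lambda\bigr)\leq \tfrac{1}{4M}$ explicit, and needs no appeal to Lemma~\ref{second_part_with_N}. The paper's combined-quantity trick buys tidiness when there are several simultaneous constraints (it packs them into a single averaging argument), but with only two constraints it offers no real advantage here. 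Both arguments deliver the same statement, with the same choices of $M$, $N_{Euler}$, and $\delta_g$.
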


\begin{proof}
    We start by showing the first inequality using the Jensen inequality (\cite[lemma 3.1]{jensen}),
    the fact that $g$ is Lipschitz function \ref{G2} and lemma \ref{sde.schema.error}
    \begin{align}
        \modul{
            \E_{\tx}g\at{X_T}
            -
            \E_{\tx}g\at{\X_T}
        }^2
        &\leq
        L_g^2\E_{\tx}\norm{
            X_T - \X_T
        }^2\nonumber\\
        \label{g_lemma:1}
        &\leq L_g^2\sdeschemaerrorc.
    \end{align}
    The other necessary inequality is
    \begin{equation}
        \label{g_lemma:2}
        \modul{
            \E_{\tx}g\at{\X_T}
            -
            \E_{\tx}\phif{g}\at{\X_T}
        }^2
        \leq \delta_g^\p.
    \end{equation}
    Combining \eqref{g_lemma:1}, \eqref{g_lemma:2} and inequality $(a+b)^2\leq 2a^2+2b^2$ we have
    \begin{equation}
        \label{g_lemma:bias}
        \modul{
            \E g\at{X_T^{\tx}}
            -
            \E\phif{g}\at{\bar{X}_T^{\tx}}
        }^\p
        \leq
        2L_g^2\sdeschemaerrorc + 2\delta_g^2.
    \end{equation}
    The above inequality shows the bound for bias.
    From this point on, we use the notation where $\X_T^{\tx}(\omega_i)$ is an $i$-th independent and identically distributed copy of $\X_T^{\tx}$.
    The next step is to show the bound for variance.

    \begin{equation}
        \label{lemmagestimator}
        \E\modul{
            \E \phif{g}\at{\X_T^{\tx}}
            - \frac{1}{M}\sum_{i=1}^M
            \phif{g}\at{\X_T^{\tx}(\omega_i)}
        }^2\leq
        \frac{1}{M}
        \Var{\phif{g}\at{\X_T^{\tx}}}.
    \end{equation}
    The above inequality is true due to independently identically distributed $\X(\omega_i)$.
    We use inequality $\Var{X}\leq\E X^2$, \ref{G3}
    \begin{equation}
        \label{lemmagvar}
        \Var{\phif{g}\at{\X_T^{\tx}}}
        \leq
        \E\modul{\phif{g}\at{\X_T^{\tx}(\omega_i)}}^2
        \leq
        2\norm{g}_{\infty}^2
        +\delta_g^2.
    \end{equation}
    By combining \eqref{lemmagestimator} with \eqref{lemmagvar}
    we achieve
    \begin{equation}
        \label{lemmagvariationbound}
        \E\modul{
            \E \phif{g}\at{\X_T^{\tx}}
            - \frac{1}{M}\sum_{i=1}^M
            \phif{g}\at{\X_T^{\tx}(\omega_i)}
        }^2\leq
        \frac{2\norm{g}_{\infty}^2
        +\delta_g^2}{M}.
    \end{equation}
    We consider the following expected value
    \begin{align}
        \label{g_lemma:Expected_value}
        \E\Bigg(
        \int_0^T\int_{\R^d}
        \left\vert
        \E\at{
            g\at{X_T^{\tx}}
        }
        - \frac{1}{M}\sum_{i=1}^M
        \phi_{g,\delta_g}\at{\bar{X}_T^{\tx}(\omega_i)}
        \right\vert^2
        f(x)
        \dd x
        \dd t\\
        \nonumber
        +
        \left\vert
        \E\sqrt{N_T}-\frac{1}{M}\sum_i^M \sqrt{N_T(\omega_i)}
        \right\vert^2
        \Bigg)
    \end{align}
    the second summand is necessary to achieve a bound on
    the number of jumps, and both parts of the above equation have to be calculated for the same $\omega_i$ for which
    the integral is small enough.
    This proves that we can replace an expected value with a DNN.
    Using lemma \ref{second_part_with_N}
    \begin{equation}
        \label{g_lemma:N_part_inequality}
        \E\left\vert
        \E\sqrt{N_T}
        -\frac{1}{M}\sum_i^M
        \sqrt{N_T(\omega_i)}
        \right\vert^2
        \leq
        \frac{T\lambda}{M}.
    \end{equation}

    We find the bound on the first part using
    inequalities~\eqref{g_lemma:bias} and \eqref{lemmagvariationbound} we have
    \begin{align}
        \Bigg(
        \int_0^T
        &\int_{\R^d}
        \left\vert
        \E\at{
            g\at{X_T^{\tx}}
        }
        - \frac{1}{M}\sum_{i=1}^M\phi_{g,\delta_g}
        \at{\bar{X}_T^{\tx}(\omega_i)}
        \right\vert^2
        f(x)
        \dd x
        \dd t
        \Bigg)\nonumber\\
        \label{g_lemma:continue}
        &\leq
        \int_0^T
        \int_{\R^d}
        \at{
            \frac{
                4\norm{g}_{\infty}^2+2\delta_g^2
            }{M}
            +8\delta_g^2
            +8L_g^2\sdeschemaerrorc
        }
        f(x)
        \dd x
        \dd t.
    \end{align}
    We set
    $M=\left\lceil\delta_g^{-2}\right\rceil$
    and
    $N_{Euler}\ge \left\lceil \delta_g^{-1/\beta_c}\right\rceil$.
    This gives us inequality $N_{Euler}^{-2\beta_c}\leq \delta_g^2$.
    With those, we can rewrite \eqref{g_lemma:N_part_inequality} as
    \begin{equation*}
        \label{g_lemma:rewrite_N_inequality}
        \E\left\vert
        \E\sqrt{N_T}
        -\frac{1}{M}\sum_i^M
        \sqrt{N_T(\omega_i)}
        \right\vert^2
        \leq
        T\lambda\delta_g^2.
    \end{equation*}
    Moreover, we have
    \begin{equation*}
        \eqref{g_lemma:continue}
        \leq
        \delta_g^2 C_g,
    \end{equation*}
    for a certain constant $C_g>0$.
    Finally, considering the last two inequalities, we obtain
    \begin{equation}
        \label{g_lemma:to_finish}
        \eqref{g_lemma:Expected_value}
        \leq
        \delta_g^2 C_g',
    \end{equation}
    for another constant $C_g'$.
    This means there exists a certain set $A\subset\Omega$ such that
    $\Pra(A)>0$ and the random variable under expected value
    at \eqref{g_lemma:Expected_value} for each $\omega\in A$ is equal or lower than $C_g/M$.
    So for $i=1,\dots,M$, there exist trajectories $\bar{X}(\omega_i)$
    such that \eqref{g_lemma:thesis} is satisfied and trajectories have a finite number of jumps.
    Moreover, for those trajectories we can apply lemma~\ref{lemma:x_net_size},
    which allows us to build a DNN satisfying the thesis.

    We need to find a bound for $\sum_{i=1}^M N_{T}(\omega_i)$.
    To do that, we use inequality $\sqrt{\sum_i a_i}\leq \sum_i \sqrt{a_i}$
    \newcommand{\NN}{N_T(\omega_i)}
    \newcommand{\Su}{\sum_{i=1}^M}

    \begin{equation*}
        \frac{1}{M^2}\Su \NN
        =
        \frac{1}{M}\at{\sqrt{\Su \NN}}^2\leq
        \at{
            \frac{1}{M}\Su\sqrt{\NN}
        }^2.
    \end{equation*}
    We use \eqref{g_lemma:rewrite_N_inequality}

    \begin{equation*}
        \at{
            \frac{1}{M}\Su\sqrt{\NN}
        }^2
        =
        \at{
            \E\sqrt{N_T}-
            \frac{1}{M}\Su\sqrt{\NN}
            -\E\sqrt{N_T}
        }^2
        \leq
        2\delta_g^2 T\lambda + 2\E^2\sqrt{N_T}.
    \end{equation*}
    For the last part we use Jensen inequality (\cite[lemma 3.1]{jensen})
    and we achieve
    \begin{equation*}
        \frac{1}{M^2}\Su \NN
        \leq
        2 T \lambda,
    \end{equation*}
    which gives us
    \begin{equation*}
        \Su \NN\leq 4M^2T\lambda.
    \end{equation*}
    \let\Su\undefined
    \let\NN\undefined
    To show~\eqref{g_lemma:thesis} we use~\eqref{g_lemma:to_finish} and take
    $\delta_g
    \leq
    \frac{\bar{\delta}}{\sqrt{C'_G}}$.
    Due to the inclusion of $\sum_{i=1}^M N_{T}(\omega_i)$ in the inequality~\eqref{g_lemma:to_finish}, we conclude that
    $\sum_{i=1}^M N_{T}(\omega_i)\leq 4M^2T\lambda.$

    We show the bound on the $\size{\phi_{1,\bar{\delta}}}$.
    Applying lemma~\ref{lemma:dnn_sum} to~\eqref{definition:g_dnn}, we have
    \[
        \size{\phi_{1,\bar{\delta}}}=\size{\frac{1}{M}\sum_{i=1}^M \phif{g}}
        \leq M\size{\phif{g}}.
    \]
    We replace the realizations of
    $\bar{X}_T$
    by $\phix{T}$ ReLU DNN from lemma~\ref{lemma:x_net_size}.
    Hence, the final size bound is
    \begin{align*}
        \size{\phi_{1,\bar{\delta}}}
        =\size{\frac{1}{M}\sum_{i=1}^M \phif{g}\circ\phix{T}}
        \leq 2M\at{
            \size{\phif{g}}
            +\size{\phix{T}}
        }.
    \end{align*}
    By combining
    $\bar{\delta}\in\mathcal{\theta}\at{\delta_g}$
    and
    $N_{Euler}\ge \left\lceil \delta_g^{-1/\beta_c}\right\rceil$
    we obtain
    \[N_{Euler}\in\mathcal{O}\at{\bar{\delta}^{-1/\beta_c}}.\]
    From~\eqref{eq:delta_c_bound} we have
    $\delta_c\leq \at{\frac{T}{N_{Euler}}}^{2\beta_c}$,
    which together with
    $M=\lceil \delta_g^{-2} \rceil$ yields
    \begin{align*}
        \size{\phi_{1,\bar{\delta}}}
        &\in
        \mathcal{O}\at{
            M\at{
                \size{\phif{g}}
                +\size{\phix{T}}
            }
        }\\
        &\in
        \mathcal{O}\at{
            \bar{\delta}^{-2}\at{
                d^{o_1}\bar{\delta}^{-o_2}
                +\size{\phix{T}}
            }
        }.
    \end{align*}
    Using lemma \ref{lemma:x_net_size},
    if $c$ is independent of the space variable, then
    \begin{align*}
        \size{\phi_{1,\bar{\delta}}}
        &\in
        \mathcal{O}\at{
            \bar{\delta}^{-2}\at{
                d^{o_1}\bar{\delta}^{-o_2}
                +d2^{N_{Euler}}+2^{N_{Euler}}\at{
                    d+d^{o_1} N_{Euler}^{2\beta_c o_2}
                }
            }
        }\\
        &\in
        \mathcal{O}\at{
            \bar{\delta}^{-2}
            d^{o_1}
            \at{\bar{\delta}^{-1/\beta_c}}^{2\beta_c o_2}
            2^{\bar{\delta}^{-1/\beta_c}}
        }\\
        &\in
        \mathcal{O}\at{
            \bar{\delta}^{-2(1+o_2)}
            d^{o_1}
            2^{\bar{\delta}^{-1/\beta_c}}
        }.
    \end{align*}
    Or if $c\in\jumpsettwo$
    \begin{align*}
        \size{\phi_{1,\bar{\delta}}}
        &\in
        \mathcal{O}\at{
            \bar{\delta}^{-2}\at{
                d^{o_1}\bar{\delta}^{-o_2}
                +d^{o_1}N_{Euler}\delta_c^{-o_2}
            }
        }\\
        &\in
        \mathcal{O}\at{
           \bar{\delta}^{-2}\at{
                d^{o_1}\bar{\delta}^{-o_2}
                +d^{o_1}N_{Euler}^{1+2\beta_c o_2}
            }
        }\\
        &\in
        \mathcal{O}\at{
            d^{o_1}\bar{\delta}^{-2}\at{
                \bar{\delta}^{-o_2}
                +\at{
                    \bar{\delta}^{-1/\beta_c}
                }^{1+2\beta_c o_2}
            }
        }\\
        &\in
        \mathcal{O}\at{
            d^{o_1}
            \bar{\delta}^{-(2o_2+1/\beta_c+2)}
        }.
    \end{align*}
    The proof is thus completed.
\end{proof}
\let\X\undefined
        \newcommand{\X}{X^{\tx}}
\newcommand{\XX}{\bar{X}^{\tx}}
\newcommand{\N}{N_{Euler}}
\newcommand{\Su}{\sum_{i=0}^{\N - 1}}
\newcommand{\dti}{(t_{i + 1} - t_{i})}

Below we show the lemma about the integral part of the solution
\begin{lemma}
    \label{lemma:b_DNN}
    suppose there exists a relu DNN $\phif{b}$ such that
    $
    \norm{
        b
        - \phif{b}
    }_{\infty}
    \leq \delta_b
    $.
    Then for any $\bar{\delta}\in(0,1)$ and any appriori distributions of $x$ with density $f$ and the finite second moment there exists a relu DNN $\phi_{2,\bar{\delta}}$ such that
    \begin{equation}
        \label{f_lemma:thesis}
        \sqrt{
            \int_{0}^T
            \int_{\R^d}
            \left\vert
            \E_{\tx}\at{
                \int_t^T
                b\at{s, X_s}
                \dd s
            }
            - \phi_{2,\bar{\delta}}\tx
            \right\vert^2
            f(x)
            \dd x
            \dd t
        }
        \leq \bar{\delta}.
    \end{equation}

    Moreover, there exists a certain constant $C>0$ such that $M = \lceil \bar{\delta}^{-2}T^{1+\beta}C \rceil$ and
    realization of the normal distributions
    $Z_{0}(\omega_i),\dots,Z_{N_{euler}}(\omega_i)$,
    realization of the Poisson process
    $N_{t_0}(\omega_i),\dots,N_{T}(\omega_i)$ and
    $\rho_1(\omega_i),\dots,\rho_{N_{T}(\omega_i)}(\omega_i)$
    for $i=1,\dots,M$ such that

    \begin{equation}
        \label{eq:b_dnn}
        \phi_{2,\bar{\delta}}\tx =
        \frac{1}{M}\sum_{j=1}^M\sum_{i=0}^{N_{euler} - 1}
        \phif{b}\at{
            t,\bar{X}_{t_i}^{\tx}(\omega_j)
        }
        (t_{i+1}-t_i),
    \end{equation}
    with
    \begin{equation*}
        \size{\phi_{2,\bar{\delta}}}\in
        \mathcal{O}\at{
            d^{o_1}
            (1+T)^{4(1+\beta)+2\beta\beta_{\jump} o_2}
            \bar{\delta}^{-5/2-(1+2\beta_{\jump}o_2)}
        },
    \end{equation*}
    where
    \begin{equation*}
        \bar{X}_{t_i}^{\tx}(\omega_j) = \bar{X}_{t_i}\at{
            t,x,
            Z_{0}(\omega_j),\dots,Z_{N_{euler}}(\omega_j),
            N_{t_0}(\omega_j),\dots,N_{T}(\omega_j),
            \rho_1(\omega_j),\dots,\rho_{N_{T}(\omega_j)}(\omega_j)
        }.
    \end{equation*}
    Moreover, there is satisfied
    \begin{equation}
        \label{b_lemma:sumN}
        \sum_{j=1}^M N_{T}(\omega_j)\leq
        4M^2T\lambda.
    \end{equation}
    For $\eta=2+\min\{\beta,2\alpha,4\beta\beta_c-2\}$
    its hold $N_{Euler}\ge \lceil T\delta_b^{-2/\eta} \rceil$
    and there exists a certain constant $C'$, such that
    $\delta_b \leq \frac{\bar{\delta}}{T^{1+\beta}C'}$.
\end{lemma}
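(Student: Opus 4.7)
The plan is to follow the blueprint of Lemma \ref{lemma:g_DNN} exactly, replacing the terminal functional $g(X_T)$ with the running integral $\int_t^T b(s,X_s)\,ds$. First I would insert intermediate quantities and apply $(a+b+c)^2\le 3(a^2+b^2+c^2)$ to split the squared pointwise bias into (i) the deterministic quadrature error $\bigl|\E_{\tx}\int_t^T b(s,X_s)\,ds - \E_{\tx}\sum_i b(t_i,X_{t_i})(t_{i+1}-t_i)\bigr|^2$ controlled by Lemma \ref{integral_schema}; (ii) the Euler-scheme error $\bigl|\E_{\tx}\sum_i b(t_i,X_{t_i})(t_{i+1}-t_i)-\E_{\tx}\sum_i b(t_i,\bar X_{t_i})(t_{i+1}-t_i)\bigr|^2$ controlled by Lemma \ref{disruptive_schema}; and (iii) the DNN-approximation error $\bigl|\E_{\tx}\sum_i (b-\phi_{b,\delta_b})(t_i,\bar X_{t_i})(t_{i+1}-t_i)\bigr|^2$, bounded by $\delta_b^2 T^2$ via \eqref{D2}. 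The Monte Carlo variance of the empirical mean over $M$ i.i.d.\ trajectories is then bounded by $\|b\|_\infty^2 T^2/M$ using \ref{B3}.

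Next, mirroring the previous lemma, I would add the auxiliary term $\bigl|\E\sqrt{N_T}-\tfrac1M\sum_j\sqrt{N_T(\omega_j)}\bigr|^2$ — controlled via Lemma \ref{second_part_with_N} by $T\lambda/M$ — to the expected squared $L^2(f\,\mathrm{d}x\,\mathrm{d}t)$ error, so that a single realization can be selected that is simultaneously accurate and has few jumps. Integrating against $f(x)\,\mathrm{d}x\,\mathrm{d}t$ and using \eqref{bound_1_x_norm} to absorb the polynomial growth in $\|x\|$, I would choose $M=\lceil CT^{1+\beta}\bar\delta^{-2}\rceil$, then fix $\delta_b\le\bar\delta/(T^{1+\beta}C')$ to control (iii), and finally take $N_{Euler}\ge \lceil T\delta_b^{-2/\eta}\rceil$ with $\eta=2+\min\{\beta,2\alpha,4\beta\beta_c-2\}$ so that (i) and (ii) are jointly of order $\bar\delta^2$ — the exponent $\eta$ arises precisely as the smaller of the two rates from Lemmas \ref{integral_schema} and \ref{disruptive_schema}. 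Positivity of the expected error then yields a good set of realizations $(\omega_1,\dots,\omega_M)$, and the $\sqrt{\cdot}$-Jensen argument from Lemma \ref{lemma:g_DNN} gives $\sum_j N_T(\omega_j)\le 4M^2 T\lambda$.

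Finally, to express \eqref{eq:b_dnn} as a ReLU DNN I would replace every realization $\bar X^{\tx}_{t_i}(\omega_j)$ by the network $\phi_X^{t_i}$ supplied by Lemma \ref{lemma:x_net_size}, compose with $\phi_{b,\delta_b}$, and aggregate the $M\cdot N_{Euler}$ summands using Lemmas \ref{lemma:dnn_composition} and \ref{lemma:dnn_sum}. The size count is dominated by $M\cdot N_{Euler}\cdot(\size{\phi_{b,\delta_b}}+\size{\phi_X^{T}})$; substituting $\size{\phi_{b,\delta_b}}\in\mathcal O(d^{o_1}\delta_b^{-o_2})$, the inequality $\delta_c\le(T/N_{Euler})^{2\beta_c}$ from \eqref{eq:delta_c_bound}, the (polynomial in $N_{Euler}$) bound on $\size{\phi_X^{T}}$, and the chosen scalings of $M$, $N_{Euler}$ and $\delta_b$, the announced rate drops out by polynomial arithmetic. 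The main obstacle I expect is exactly this bookkeeping: the three parameters $M$, $N_{Euler}$ and $\delta_b$ are coupled to $\bar\delta$ through three distinct exponents, and one must verify that the different $T$-powers collapse into the stated $(1+T)^{4(1+\beta)+2\beta\beta_c o_2}$ prefactor and that no dimension-exponential term sneaks in through $\size{\phi_X^{T}}$.
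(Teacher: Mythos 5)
Your proposal reproduces the paper's proof in its essential structure: the same three-way bias decomposition (quadrature error via Lemma~\ref{integral_schema}, Euler-discretization error via Lemma~\ref{disruptive_schema}, and DNN-substitution error of order $T^2\delta_b^2$), the same variance bound $T^2\|b\|_\infty^2/M$ over i.i.d.\ trajectories, the same auxiliary $\bigl|\E\sqrt{N_T}-\tfrac1M\sum_j\sqrt{N_T(\omega_j)}\bigr|^2$ term added to the expected $L^2(f\,\mathrm{d}x\,\mathrm{d}t)$ error to pick realizations that are simultaneously accurate and have controlled jump counts, the same parameter scalings for $M$, $N_{Euler}$, $\delta_b$ yielding the exponent $\eta$, and the same passage to a ReLU network via Lemmas~\ref{lemma:x_net_size}, \ref{lemma:dnn_composition}, \ref{lemma:dnn_sum}. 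The only cosmetic difference is using $(a+b+c)^2\le 3(a^2+b^2+c^2)$ instead of the paper's nested $(a+b)^2\le 2a^2+2b^2$; this changes no exponents.
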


\begin{proof}
    We already have the following inequalities:
    from lemma \ref{integral_schema} (we include contants $T$ and $L_b$ in the contstant $K$)
    \begin{equation}
        \label{f_lemma:2}
        \modul{
            \E\int_t^T b\at{s, X_s^{\tx}}\dd s
            -\E_{\tx}\sum_{i}^{N_{euler} -1} b\at{t_i, X_{t_i}} (t_{i+1} - t_i)
        }^2
        \leq\intschemacnotime.
    \end{equation}
    From lemma \ref{disruptive_schema}
    \begin{align}
        \nonumber
        \modul{
            \E\Su
            b\at{t_i, \X_{t_i}}\dti
            - \E\Su
            b\at{t_i, \XX_{t_i}}\dti
        }^2
        \leq\\
        \label{f_lemma:3}
        \disruptiveschemac.
    \end{align}
    From assumptions about $\phif{b}$ and Jensen inequality we have
    \begin{align}
        \modul{
            \E\Su
            b\at{t_i, \XX_{t_i}}\dti
            -
            \E\Su
            \phif{b}\at{t_i, \XX_{t_i}}\dti
        }^2\nonumber\\
        \label{f_lemma:4}
        \leq
        \E\modul{
            \Su
            \at{
                b\at{t_i, \XX_{t_i}} - \phif{b}\at{t_i, \XX_{t_i}}
            }
            \dti
        }^2
        \leq
        T^2\delta_b^2.
    \end{align}
    To find an upper bound for bias we use
    \eqref{f_lemma:2}, \eqref{f_lemma:3} and \eqref{f_lemma:4}
    \begin{align*}
        \bigg\vert
        \E_{\tx}\int_t^T &b\at{s,X_s}\dd s
        -\E\Su\phif{b}\at{t_i, \XX_{t_i}}\dti
        \bigg\vert^2\\
        &\leq
        4T^2\delta_b^2
        +4\disruptiveschemac
        + 2\intschemacnotime\\
        &\leq
        4T^2\delta_b^2
        + C
       {\N
        }^{-2-\min\{\beta, 2\alpha,4\beta\beta_\jump - 2\}}\at{1+\norm{x}^\p}^\beta.
    \end{align*}

    To simplify the inequality above, we set $\eta=2+\min\{\beta, 2\alpha,4\beta\beta_\jump - 2\}$ and take
    $\N\ge \left\lceil \delta_b^{-2/\eta}\right\rceil$, what gives us inequality
    \begin{equation*}
        \N^{-2-\min\{\beta, 2\alpha,4\beta\beta_\jump - 2\}}\leq \delta_b^2.
    \end{equation*}
    After those settings, we have
    \begin{equation}
        \label{f_lemma:bias_bound}
        \modul{
            \E_{\tx}\int_t^T b\at{s,X_s}\dd s
            -\E\Su\phif{b}\at{t_i, \XX_{t_i}}\dti
        }^2
        \leq \delta_b^2 C_b\at{1+\norm{x}^\p}^\beta,
    \end{equation}
    for a certain constant $C_b$.

    We focus on the variance
    \begin{equation}
        \label{f_lemma:variance}
        \modul{
            \E\Su\phif{b}\at{t_i, \XX_{t_i}}\dti
            -\frac{1}{M}\sum_{j=1}^M\Su\phif{b}\at{t_i, \XX_{t_i}(\omega_j)}\dti
        }^2.
    \end{equation}
    To find an upper bound on the variances, we use the fact that each trajectory is generated independently,
    so the variance of the sum is equal to the sum of variances and inequality $\Var{X}\leq \E X^2$
    \begin{equation*}
        \eqref{f_lemma:variance}
        \leq
        \frac{1}{M}\E\modul{
            \Su\phif{b}\at{t_i, \XX_{t_i}}\dti
        }^2.
    \end{equation*}
    In the next step, we use inequality $\norm{\phif{b}}_{\infty}\leq \norm{b}_{\infty} + \delta_b$, \ref{B3} and achieve
    \begin{equation*}
        \eqref{f_lemma:variance}
        \leq
        \frac{T^2(\norm{b}_{\infty}+\delta_b)^2}{M}.
    \end{equation*}
    To simplify, we select $M=\lceil\delta_b^{-2}\rceil$ and obtain
    \begin{equation}
        \label{f_lemma:variance_bound}
        \eqref{f_lemma:variance}
        \leq
        \delta_b^2 C_{\phi},
    \end{equation}
    for certain constant $C_{\phi}>0$.
    We put
    \begin{align*}
        \mathrm{I} = \int_{0}^T
        \int_{\R^d}
        \bigg\vert
        &\E_{\tx}\at{
            \int_t^T
            b\at{X_T^{\tx}}
        }\\
        &-
        \frac{1}{M}\sum_{j=1}^M
        \Su
        \phif{b}\at{t_i, \XX_{t_i}(\omega_j)}
        \dti
        \bigg\vert^2
        f(x)
        \dd x \dd t.
    \end{align*}
    We use Fubini theorem (\cite{fubini}) with \eqref{f_lemma:variance_bound} and \eqref{f_lemma:bias_bound} to obtain
    \begin{equation*}
        \mathrm{I}
        \leq
        \delta_b^2C'
        \int_{\R^d}
        \at{1+\norm{x}^\p}^\beta
        f(x)
        \dd x,
    \end{equation*}
    for a certain constant $C'>0$.
    Moreover, the expression under the integral is integrable due to $\at{1+\norm{x}^\p}^\beta\leq 1 + \norm{x}^2$,
    so we have
    \begin{equation}
        \label{f_lemma:i_bound}
        \mathrm{I}
        \leq
        \delta_b^2 C_b,
    \end{equation}
    for another constant $C_b>0$.
    We bound the following expression, similarly as it was done in the proof of lemma \ref{lemma:g_DNN}
    \begin{align}
        \label{f_lemma:inner_full_e}
        \E\Bigg(
        \mathrm{I}
        +
        \left\vert
        \E\sqrt{N_T}-\frac{1}{M}\sum_{i=1}^M \sqrt{N_T(\omega_i)}
        \right\vert^2
        \Bigg).
    \end{align}
    Considering the second part using lemma \ref{second_part_with_N}, we have
    \begin{equation*}
        \E
        \left\vert
        \sqrt{N_T}
        -\frac{1}{M}\sum_{i=1}^M
        \sqrt{N_T(\omega_i)}
        \right\vert^2
        \leq
        \frac{T\lambda}{M}\leq \delta_b^2T\lambda.
    \end{equation*}
    Combining the above inequality with \eqref{f_lemma:i_bound} we obtain
    \begin{equation*}
        \eqref{f_lemma:inner_full_e}\leq
        \delta_b^2 C_{Error},
    \end{equation*}
    for a certain constant $C_{Error} >0$.
    This means there exists a certain set $A\subset\Omega$ such that
    $\Pra(A)>0$ and the random variable under expected value at \eqref{f_lemma:inner_full_e} for each $\omega\in A$ is equal to or lower than $\delta_b^2 C_{Error}$.
    Consequently, there exists a realization of the normal distributions
    $Z_{0}(\omega_j),\dots,Z_{N_{euler}}(\omega_j)$,
    realization of the Poisson process
    $N_{t_0}(\omega_j),\dots,N_{T}(\omega_j)$ and
    $\rho_1(\omega_j),\dots,\rho_{N_{T}(\omega_j)}(\omega_j)$
    for $j=1,\dots,M$ such that \eqref{f_lemma:thesis} is satisfied.
    The bound on $\sum_{i=1}^MN_T(\omega_j)$ is almost the same as in the proof of lemma \ref{lemma:g_DNN}.
    To complete this proof, we select
    \begin{equation*}
        \delta_b\leq \frac{\bar{\delta}}{\sqrt{C_{Error}}}.
    \end{equation*}

    We show the bound on the $\size{\phi_{2,\bar{\delta}}}$.
Applying lemma~\ref{lemma:dnn_sum} to~\eqref{eq:b_dnn} we have
\[
    \size{\phi_{2,\bar{\delta}}}
    =\size{
        \frac{1}{M}\sum_{j=1}^M
        \sum_{i}^{\N-1}
        \phif{b}\at{
            t_i, \bar{X}(\omega_j)
        }(t_{i+1}-t_i)
    }
    \leq
    \N M\size{\phif{b}}.
\]
We replace the realizations of
$\bar{X}_{t_i}(\omega_j)$
by $\phix{t_i}$ ReLU DNN from lemma~\ref{lemma:x_net_size}.
Hence, we replace $\size{\phif{b}}$ by $\size{\phif{b}\circ\phix{T}}$
in the last equation.
By combining
$\bar{\delta}\in\mathcal{\theta}\at{\delta_b}$,
$\N\ge \left\lceil \delta_b^{-2/\eta}\right\rceil$
and $\eta\in[0,4]$
we obtain
\[
    N_{Euler}\in\mathcal{O}\at{\bar{\delta}^{-2/\eta}}.
\]
From~\eqref{eq:delta_c_bound} we have
$\delta_c\leq N_{Euler}^{-2\beta_c}$,
which together with
$M=\lceil \delta_b^{-2} \rceil$
yields
\begin{align*}
    \size{\phi_{2,\bar{\delta}}}
    &\in
    \mathcal{O}\at{
        \N M\at{
            \size{\phif{b}}
            +\size{\phix{T}}
        }
    }\\
    &\in
    \mathcal{O}\at{
        \bar{\delta}^{-2/\eta}
        M
        \at{
            d^{o_1}\bar{\delta}^{-o_2}
            +\size{\phix{T}}
        }
    }\\
    &\in
    \mathcal{O}\at{
        \bar{\delta}^{-2(1+\eta)/\eta}
        \at{
            d^{o_1}\bar{\delta}^{-o_2}
            +\size{\phix{T}}
        }
    }.
\end{align*}
Using lemma~\ref{lemma:x_net_size},
if $c$ is dependent on the space variable, then
\begin{align*}
    \size{\phi_{2,\bar{\delta}}}
    &\in
    \mathcal{O}\at{
        \bar{\delta}^{-2(1+\eta)/\eta}
        \at{
            d^{o_1}\bar{\delta}^{-o_2}
            +2^{\N}
            d^{o_1} \N^{2\beta_c o_2}
        }
    }\\
    &\in
    \mathcal{O}\at{
        d^{o_1}
        \bar{\delta}^{-2(1+\eta)/\eta}
        2^{\bar{\delta}^{-2/\eta}}
        \at{
            \bar{\delta}^{-2/\eta}
        }^{2\beta_c o_2}
    }\\
    &\in
    \mathcal{O}\at{
        d^{o_1}
        \bar{\delta}^{-\frac{2\at{2+\eta + 2\beta_c o_2}}{\eta}}
        2^{\bar{\delta}^{-2/\eta}}
    }.
\end{align*}
Or if $c\in\jumpsettwo$
\begin{align*}
    \size{\phi_{2,\bar{\delta}}}
    &\in
    \mathcal{O}\at{
        \bar{\delta}^{-2(1+\eta)/\eta}
        \at{
            d^{o_1}\bar{\delta}^{-o_2}
            +d^{o_1}\N\delta_c^{-o_2}
        }
    }\\
    &\in
    \mathcal{O}\at{
        d^{o_1}
        \bar{\delta}^{-2(1+\eta)/\eta}
        \at{
            \bar{\delta}^{-o_2}
            +
            \at{
                \bar{\delta}^{-2/\eta}
            }^{1+2\beta_c o_2}
        }
    }\\
    &\in
    \mathcal{O}\at{
        d^{o_1}
        \bar{\delta}^{-2(1+\eta)/\eta}
        \at{
            \bar{\delta}^{-o_2}
            +
            \bar{\delta}^{-2(1+2\beta_c o_2)/\eta}
        }
    }\\
    &\in
    \mathcal{O}\at{
        d^{o_1}
        \bar{\delta}^{\frac{-2\at{2+\eta +2\beta_{\jump}o_2}}{\eta}}
    }.
\end{align*}
This completes the proof.

\end{proof}

\let\XX\undefined
\let\dti\undefined
\let\X\undefined
\let\Su\undefined
\let\N\undefined

    \section{Proof of the main result}
    \label{sec:main_proof}
    We are ready to prove the main theorem.
\newcommand{\phig}{\phi_{1,\delta_1}}
\newcommand{\phii}{\phi_{2,\delta_2}}

\begin{refproof}[Proof of Theorem \ref{th:main_theorem}]
    We define $\phi=\phig+\phii$,
    where $\phig$ is the relu DNN from lemma~\ref{lemma:g_DNN}
    with $\bar{\delta}=\delta_1\in(0,1)$
    and $\phii$ is the relu DNN from lemma~\ref{lemma:b_DNN}
    with $\bar{\delta}=\delta_2\in(0,1)$.
    The exact values of $\delta_1$ and $\delta_2$ are specified later.
    Using~\eqref{eq:u_decompositin},~\eqref{f_lemma:thesis}
    and~\eqref{g_lemma:thesis} we obtain
    \begin{align*}
        \int_0^T\int_{\R^d}
            &\modul{u(t,x)-\phi(t,x)}^2 f(x)
        \mathrm{d}x\mathrm{d}t\\
        &\leq
        \int_0^T\int_{\R^d}
            \modul{
                \E\at{g\at{X_T^{(t,x)}}} - \phig(t,x)
                +\E\int_t^T f\at{s,X_s^{(t,x)}}\dd s
                -\phii(t,x)
            }^2 f(x)
        \mathrm{d}x\mathrm{d}t\\
        &\leq
        2\at{\delta_1^2+\delta_2^2}.
    \end{align*}
    We take $\delta_1=\delta_2=\delta/2$
    to achieve~\eqref{eq:final_lemma_thesis}.

    If $c$ is dependent on the space variable then
    from lemma~\ref{lemma:g_DNN} and~\ref{lemma:b_DNN}
    \begin{align*}
        \size{\phig}&\in\mathcal{O}\at{
            \delta^{-2(1+o_2)}
            d^{o_1}
            2^{\delta^{-1/\beta_c}}
        }\\
        \size{\phii}&\in\mathcal{O}\at{
            d^{o_1}
            \delta^{\frac{-2\at{2+\eta + 2\beta_c o_2}}{\eta}}
            2^{\delta^{-2/\eta}}
        }
    \end{align*}
    Which with $\eta=2+\min\{\beta, 2\alpha, 4\beta,\beta_{\jump}-2\}$ and
    $\kappa=\mathbf{1}_{\{\beta\ge\frac{1}{2}\}}+\frac{\mathbf{1}_{\{\beta<\frac{1}{2}\}}}{2\beta}$ yields
    \begin{equation*}
        \size{\phi}\in\mathcal{O}\at{
            d^{o_1}
            \delta^{-2\at{
                \frac{2}{\eta}+1
                + o_2\kappa
            }}
            2^{
                \delta^{-\kappa}/\beta_\jump
            }
        }.
    \end{equation*}

    Or if $c\in\jumpsettwo$ then
    from lemma~\ref{lemma:g_DNN} and~\ref{lemma:b_DNN}
    \begin{align*}
        \size{\phig}&\in\mathcal{O}\at{
            d^{o_1}
            \delta^{-(2o_2+1/\beta_c+2)}
        }\\
        \size{\phii}&\in\mathcal{O}\at{
            d^{o_1}
            \delta^{\frac{-2\at{2+\eta +2\beta_{\jump}o_2}}{\eta}}
        },
    \end{align*}
    which yields
    \begin{equation*}
        \size{\phi}\in\mathcal{O}\at{
            d^{o_1}
            \delta^{
                -2\at{
                    \max\{\frac{2}{\eta},\frac{1}{\beta_c}\}
                    +1
                    + o_2\kappa
                }
            }
        }.
    \end{equation*}
    The proof is thus completed.
\end{refproof}

    \section{Appendix}
        \begin{proof}[of lemma \ref{bound_for_x_norm}]
    \let\E\undefined
    \newcommand{\E}{\mathbb{E}_{\tx}}
    \newcommand{\x}{X}
    \newcommand{\lpnorm}{\mathcal{L}^\p\at{\Omega,\Pra_{\tx}}}
    We use the following form of $X_r^{\tx}$
    \begin{equation*}
        X_r^{\tx} = x + W_r-W_t
        + \int_t^r\int_\Z
        \jump(s, X_s^{\tx},z)
        \dismu(\dd z,\dd s).
    \end{equation*}
    From the above equation using $(a+b)^2\leq2a^2+2b^2$
    \begin{equation}
        \label{lemma_finite_1}
        \E\norm{\x_r}^\p\leq 4\norm{x}^\p + 4(r-t)
        + 2\E\norm{
            \int_t^r\int_\Z
            \jump(s, \x_s,z)
            \dismu(\dd z,\dd s)
        }^\p.
    \end{equation}
    We focus on the last norm, and from Kunita's inequality (see \cite[Theorem 4.4.23]{kunito_inequality})
    we have there exists a certain constant $C_1>0$ such that
    \begin{equation*}
        \E\norm{
            \int_t^r\int_\Z
            \jump(s, \x_s,z)
            \dismu(\dd z,\dd s)
        }^\p
        \leq
        C_1\E\int_t^r\int_\Z
        \norm{\jump(s, \x_s,z)}^\p
        \nu(\dd z)\dd s.
    \end{equation*}
    The above inequality, \eqref{lemma_finite_1} and the assumption \ref{c_1} yields
    \begin{equation*}
        \E\norm{\x_r}^\p\leq\infty.
    \end{equation*}
    We put $C_2=L_\jump + K_\jump$.
    See that $C_2(1+T^{\beta_c})\ge \max\{L_\jump,L_\jump T^{\rho_\jump} + K_\jump\}$ and $C_2$ is independent of $T$.
    We use Fubini theorem
    (see~\cite[lemmma 3.2]{conditional_fubini}) and~\eqref{jump_int_norm}
    \begin{align}
        \E\norm{
            \int_t^r\int_\Z
            \jump(s, \x_s,z)
            \dismu(\dd z,\dd s)
        }^\p
        &\leq
        C_1\int_t^r
        \E\at{C_2(1+T^{\beta_c})\at{1 + \norm{\x_s}}}^\p
        \dd s.\nonumber\\
        &\leq
        C_1\at{\int_t^r
        \E\norm{
            C_2(1+T^{\beta_c})\at{1 + \norm{\x_s}}
        }^\p
        \dd s
        }\nonumber\\
        &\leq\label{lemma_finite_2}
        C_1\at{C_2(1+T^{\beta_c}) + \int_t^r\E\norm{\x_s}^\p\dd s}.
    \end{align}
    We combine~\eqref{lemma_finite_1},~\eqref{lemma_finite_2} and inequality $r-t\leq T$ to obtain
    \begin{equation*}
        \E\norm{\x_r}^\p
        \leq
        4\norm{x}^\p + (4 + 2C_1 C_2 (1+T^{\beta_c})T
        + \int_t^r C_1\E\norm{\x_s}^\p\dd s.
    \end{equation*}
    Finally, we use Gronwell's inequality(see~\cite[Theorem 8.1]{gronwal_bound})
    \begin{equation*}
        \E\norm{\x_r}^\p
        \le
        4\norm{x}^\p + (4 + 2C_1 C_2 (1+T^{\beta_c})T
        + e^{C_{1}T}
        \leq
        K\at{1+\norm{x}^\p}\at{1+e^{C_1 T}},
    \end{equation*}
    for certain constant $K > 0$ dependent only on the parameters of class $\jumpset$.
    This completes the proof.
    \let\lpnorm\undefined
    \let\x\undefined
    \let\E\undefined
    \newcommand{\E}{\mathbb{E}}
\end{proof}

\begin{proof}[of lemma \ref{bound_for_x_diff_norm}]
    \let\E\undefined
    \newcommand{\E}{\mathbb{E}_{\tx}}

    We write left side from thesis inequality in integral forms.
    \begin{align}
        \E\norm{X_{s_1} - X_{s_2}}^\p
        &\leq \E\norm{
            \int_{s_2}^{s_1}\dd W_s
            + \int_{\Z\times[s_2,s_1]}
            \jump\at{s, X_{s-}, z}
            \dismu(\dd z,\dd s)
        }^\p\nonumber\\
        &\leq
        2\at{
            \E\norm{\int_{s_2}^{s_1}\dd W_s}^\p
            + \E\norm{
                \int_{\Z\times[s_2,s_1]}
                \jump\at{s, X_{s-}, z}
                \dismu(\dd z,\dd s)
            }^\p
        }.\label{lemma_2_1}
    \end{align}
    Based on Kunita's inequality, there
    exists a certain constant $C_1$ such that
    \begin{align*}
        \E\norm{
            \int_{\Z\times[s_2,s_1]}
            \jump\at{s, X_{s-}, z}
            \dismu(\dd z,\dd s)
        }^\p
        &\leq
        C_1\E\int_{\Z\times[s_2,s_1]}
        \norm{\jump\at{s, X_{s-}, z}}^\p
        \nu(\dd z)\dd s.
    \end{align*}
    Using Fubini theorem and \eqref{jump_int_norm} we have
    \begin{equation}
        \label{lemma_2_2}
        \E\norm{
            \int_{\Z\times[s_2,s_1]}
            \jump\at{s, X_{s-}, z}
            \dismu(\dd z,\dd s)
        }^\p
        \leq
        C\int_{s_2}^{s_1} \E\at{1+\norm{X_s}}^\p\dd s.
    \end{equation}
    For certain constant $C$ dependent only on the parameters of class $\jumpset$.
    Combining \eqref{lemma_2_1}, \eqref{lemma_2_2} and lemma \ref{bound_for_x_norm} we have
    \begin{equation*}
        \E\norm{X_{s_1} - X_{s_2}}^\p\leq K(s_1-s_2)(1 + \norm{x}^2)(1+e^{K' T}),
    \end{equation*}
    for certain constant $K,K'\in(0,\infty)$ dependent only on the parameters of class $\jump$.
    This completes the proof.
    \let\E\undefined
    \newcommand{\E}{\mathbb{E}}
\end{proof}

\begin{proof}[of lemma \ref{sde.schema.error}]
    \newcommand{\X}{X^{\tx}}
    \newcommand{\XX}{\bar{X}^{\tx}}
    If we take a look at
    \begin{equation*}
        \X_s = x + W_s - W_t
        + \int_{Z\times(t,s]}\jump(r,\X_r, z)\dismu(\dd z,\dd r),
    \end{equation*}
    then we can see that $W_s-W_t$ has the same distributions as $Z\sqrt{s-t}$, and the whole difference between expected values comes from
    \begin{equation}
        \label{_lemma_error}
        \E\norm{
            \int_{Z\times(t,s]}\jump(r,\X_r,z)\dismu(\dd z,\dd r)
            - \sum_{N(t)<k \leq N(s)}\phif{\jump}\at{t_i(k,N),\XX_{t_i(k,N)},\tilde{\rho}_k}
        }^\p.
    \end{equation}
    Moreover, due to definition of $\tilde{\rho}$ we can express the sum in the integral form and achieve
    \begin{equation}
        \label{_lemma_4_for_kunita}
        \E\norm{
            \int_{Z\times(t,s]}\at{
                \jump(r,\X_r, z)
                - \phif{\jump}\at{t_i(k,N),\XX_{t_i(k,N)}, z}
            }\dismu(\dd z,\dd r)
        }^\p.
    \end{equation}
    Using Kunita's inequality, there exists a certain constant $C$ such that
    \begin{equation*}
        \eqref{_lemma_4_for_kunita}
        \leq
        C\E\int_{Z\times(t,s]}
        \norm{
            \jump\at{r,\X_r, z}
            - \phif{\jump}\at{t_i(k,N),\XX_{t_i(k,N)}, z}
        }^\p\nu(\dd z)\dd r.
    \end{equation*}
    For the proof, if there are other constants to estimate, we include them all in constant $C$.
    We use inequality $(a+b)^2\leq 2a^2+2b^2$ and have
    \begin{align*}
        \eqref{_lemma_4_for_kunita}
        \leq
        &C\E\int_{Z\times(t,s]}
        \norm{
            \jump\at{r,\X_r, z}
            - \jump\at{t_i(k,N),\X_r, z}
        }^\p\nu(\dd z)\dd r\\
        &+
        C\E\int_{Z\times(t,s]}
        \norm{
            \jump\at{t_i(k,N),\X_r, z}
            - \jump\at{t_i(k,N),\XX_{t_i(k,N)}, z}
        }^\p
        \nu(\dd z)\dd r\\
        &+
        C\E\int_{Z\times(t,s]}
        \norm{
            \jump\at{t_i(k,N),\XX_{t_i(k,N)}, z}
            -\phif{\jump}\at{t_i(k,N),\XX_{t_i(k,N)}, z}
        }^\p
        \nu(\dd z)\dd r.
    \end{align*}
    From \ref{c_3}, \ref{c_4} and \eqref{D1} we obtain
    \begin{equation*}
        \eqref{_lemma_4_for_kunita}
        \leq
        C\E\int_t^s
        \at{1+\norm{\X_r}}^2(r-t_i(r))^{2\beta_\jump}
        \dd r
        +
        C\E\int_t^s
        \norm{\X_{t_i(r)}-\XX_{t_i(r)}}^2
        \dd r + \delta_c.
    \end{equation*}
    To simplify, we select $\delta_\jump\leq \at{\frac{T}{N_{Euler}}}^{2\beta_\jump}$ (see \eqref{eq:delta_c_bound}).
    Such an approach helps us hide additional dependencies in constants.
    The value of $N_{Euler}$ is specified in the further part of the article.
    Using \eqref{bound_1_x_norm} and $(s-t)\leq T$ we simplify first integral and have
    \begin{equation*}
        \E\norm{
            \X_s
            - \XX_s
        }^2\leq \eqref{_lemma_4_for_kunita}
        \leq
        CT\at{\frac{T}{N_{Euler}}}^{2\beta_\jump}
        K\at{1+\norm{x}^\p}\at{1+e^{K' T}}
        + C\E\int_t^s
        \norm{\X_{t_i(r)}-\XX_{t_i(r)}}^2
        \dd r.
    \end{equation*}
    Finally, from Gronwell's inequality (see~\cite[Theorem 8.1]{gronwal_bound}) we obtain
    \begin{equation*}
        \E\norm{
            \X_s
            - \XX_s
        }^2
        \leq
        C_1 N_{Euler}^{-2\beta_\jump}
        \at{1+\norm{x}^\p}\at{1+e^{C_2 T}}.
    \end{equation*}
    The proof is thus completed.
    \let\X\undefined
    \let\XX\undefined
\end{proof}

        \let\XX\undefined
\let\dti\undefined
\let\X\undefined
\let\Su\undefined
\let\N\undefined

\newcommand{\X}{X^{\tx}}
\newcommand{\XX}{\bar{X}^{\tx}}
\newcommand{\N}{N_{Euler}}
\newcommand{\Su}{\sum_{i=0}^{\N - 1}}
\newcommand{\dti}{(t_{i + 1} - t_{i})}

\begin{proof}[of lemma \ref{integral_schema}]
    We start by expressing the sum in the sum of the integral form
    \begin{equation}
        \label{_internal_is1}
        \Su b\at{t_i, X_{t_i}} (t_{i + 1} - t_{i})=
        \Su \int_{t_i}^{t_{i+1}}b\at{t_i, X_{t_i}}  \dd s.
    \end{equation}
    We split the integral
    \begin{equation}
        \label{_internal_is2}
        \int_t^T b\at{s, X_s}\dd s=
        \Su \int_{t_i}^{t_{i+1}}b\at{s, X_{s}}  \dd s.
    \end{equation}
    We put
    \begin{equation*}
        E = \modul{
            \E_{\tx}\int_t^T b\at{s, X_s}\dd s
            - \E_{\tx}\sum_{i}^{N_{euler} -1} b\at{t_i, X_{t_i}} (t_{i+1} - t_i)
        }^2.
    \end{equation*}
    From \eqref{_internal_is1} and \eqref{_internal_is2} we obtain
    \begin{equation*}
        E = \modul{
            \Su\E_{\tx}\int_{t_i}^{t_{i+1}}\at{
                b\at{s, X_s}
                -  b\at{t_i, X_{t_i}}
            }\dd s
        }^2.
    \end{equation*}
    Using triangle and Jensen inequality (see \cite[lemma 3.1]{jensen})
    we have
    \begin{equation*}
        E \leq \at{
            \Su\E_{\tx}\int_{t_i}^{t_{i+1}}
            \modul{
                b\at{s, X_s}
                -  b\at{t_i, X_{t_i}}
            }
            \dd s
        }^2
    \end{equation*}
    Taking both side square roots and using \ref{B2} we obtain
    \begin{equation*}
        \sqrt{E} \leq L_b\Su\E_{\tx}\int_{t_i}^{t_{i+1}}
        \modul{
            s-t_i
        }^{\alpha}
        + \norm{X_s-X_{t_i}}^{\beta}
        \dd s.
    \end{equation*}
    We replace $(t_{i+1} - t_i)$ by $T/\N$ and use Fubini theorem for conditional expected value \cite[lemma 3.2]{conditional_fubini}
    to obtain
    \begin{equation}
        \label{inner_is_3}
        \sqrt{E}\leq
        \frac{L_bT}{\N}\Su\at{
            \frac{T^{\alpha+1}}{\N^{\alpha + 1}(\alpha+1)}
            +
            \int_{t_i}^{t_{i+1}}
            \E_{\tx}\norm{X_s-X_{t_i}}^{\beta}
            \dd s
        }.
    \end{equation}
    We focus on the expected value and use Jensen inequality for concave functions (see \cite[lemma 3.1]{jensen}) and lemma \ref{bound_for_x_diff_norm}
    \begin{equation*}
        \E_{\tx}\norm{X_s-X_{t_i}}^{\beta}
        \leq
        \at{
            \E_{\tx}\norm{X_s-X_{t_i}}^2
        }^{\beta/2}
        \leq K\at{1+\norm{x}^2}^{\beta/2}(s-t_i)^{\beta/2}.
    \end{equation*}
    This brings us to
    \begin{equation}
        \label{inner_is_4}
        \int_{t_i}^{t_{i+1}}
        \E_{\tx}\norm{X_s-X_{t_i}}^{\beta}
        \dd s
        \leq
        K\at{1+\norm{x}^2}^{\beta/2}
        \int_{t_i}^{t_{i+1}}
        (s-t_i)^{\beta/2}
        \dd s
        \leq
        K\at{1+\norm{x}^2}^{\beta/2}
        \frac{T^{\beta/2+1}}{\N^{\beta/2+1}
            (\beta/2 + 1)}.
    \end{equation}
    From \eqref{inner_is_3} and \eqref{inner_is_4}
    we obtain
    \begin{equation*}
        \sqrt{E}
        \leq
        L_bT\at{\frac{T}{\N}}^{1 +\min\{\alpha, \beta/2\}}
        K\at{1+\norm{x}^2}^{\beta/2}.
    \end{equation*}
    In the inequality above, we treat $K$ as a generic constant to avoid defining additional redundant constants.
    To simplify, we use the Jensen inequality for the concave function.
    Finally, we obtain
    \begin{equation*}
        E
        \leq
        2\at{L_bTK}^{2}\at{\frac{T}{\N}}^{2+\min\{\beta, 2\alpha\}}
        (1 + \norm{x}^2)^\beta,
    \end{equation*}
    and this completes the proof.

\end{proof}

\begin{proof}[of lemma \ref{disruptive_schema}]
    We refer to the left side of the inequality from the thesis, we put them into the equation
    \begin{equation}
        \label{_innerE}
        \modul{
            \E\Su
            b\at{t_i, \X_{t_i}}\dti
            -\E\Su
            b\at{t_i, \XX_{t_i}}\dti
        }^2.
    \end{equation}
    We can simplify by replacing $\dti$ by $\frac{T}{\N}$ and obtain
    \begin{equation*}
        \eqref{_innerE} \leq \at{\frac{T}{\N}}^2
        \modul{
            \E\Su \at{
                b\at{t_i, \X_{t_i}}
                - b\at{t_i, \XX_{t_i}}
            }
        }^2.
    \end{equation*}
    We use triangle inequality to get a sum of norms

    \begin{align}
        \nonumber
        \eqref{_innerE}
        &\leq
        \at{
            \frac{T}{\N}}^2
        \at{
            \Su \E\modul{
                b\at{t_i, \X_{t_i}}
                - b\at{t_i, \XX_{t_i}}
            }
        }^2\\
        \label{_inner_new1}
        &\leq
        L_b^2
        \at{
            \frac{T}{\N}}^2
        \at{
            \Su \E\norm{
                \X_{t_i}
                - \XX_{t_i}
            }^{\beta}
        }^2,
    \end{align}
    where the last inequality comes from \ref{B2}.
    We use Jensen inequality for concave function (see \cite[lemma 3.1]{jensen})
    \begin{equation}
        \label{_inner_new2}
        \E\norm{
            \X_{t_i}
            - \XX_{t_i}
        }^{\beta}
        \leq
        \at{\E\norm{
            \X_{t_i}
            - \XX_{t_i}
        }}^\beta
        \leq
        \at{\E\norm{
            \X_{t_i}
            - \XX_{t_i}
        }^2}^\beta.
    \end{equation}
    We apply lemma \ref{sde.schema.error} to \eqref{_inner_new2} and combine with \eqref{_inner_new1} to obtain
    \begin{equation*}
        \eqref{_innerE}
        \leq
        L_b^2\at{\sdeschemaerrorc}^{\beta}.
    \end{equation*}
    After simplifications, we get
    \begin{equation*}
        \eqref{_innerE}
        \leq K'\N^{-2\beta\beta_\jump}\at{1+\norm{x}^\p}^{\beta},
    \end{equation*}
    for a certain constant $K'$.
    The proof is thus completed.
\end{proof}

\let\XX\undefined
\let\dti\undefined
\let\X\undefined
\let\Su\undefined
\let\N\undefined


    \newpage
    \bibliographystyle{plain} 
    \bibliography{bibliography} 

\end{document}